\newtheorem{Remark}{Remark}[section]
 \NewDocumentCommand{\INTERVALINNARDS}{ m m }{
 	#1 {,} #2
 }
 \NewDocumentCommand{\interval}{ s m >{\SplitArgument{1}{,}}m m o }
 {
 	\IfBooleanTF{#1}{
 		\left#2 \INTERVALINNARDS #3 \right#4
 	}{
 		\IfValueTF{#5}{
 			#5{#2} \INTERVALINNARDS #3 #5{#4}
 		}{
 			#2 \INTERVALINNARDS #3 #4
 		}
 	}
 }
 \newtheorem{theorem}{Theorem}[section]
 \newtheorem{lemma}[theorem]{Lemma}
 \newtheorem{example}{Example}[section]
 \newtheorem{Proposition}[theorem]{Proposition}
 \newtheorem{obs}[theorem]{Observation}
\begin{document} 
\title{Quadratic and cubic  Newton maps of rational functions}
  
\author[1]{Tarakanta Nayak\footnote{ tnayak@iitbbs.ac.in}}
\author[1]{Soumen Pal\footnote{Corresponding author, sp58@iitbbs.ac.in}}
\affil[1]{School of Basic Sciences,
Indian Institute of Technology Bhubaneswar, India}
 
\maketitle
\begin{abstract}
The dynamics of all quadratic Newton maps of rational functions are completely described. The Julia set of such a map is found to be either a Jordan curve or totally disconnected. It is proved that no Newton map with degree at least three of any rational function is conformally conjugate to a unicritical polynomial(i.e., with exactly one finite critical point). However, there are cubic Newton maps which are conformally conjugate to other polynomials. The Julia set of such a Newton map is shown to be a closed curve. It is a Jordan curve whenever the Newton map has two attracting fixed points.  
\end{abstract}
\textit{Keyword:}
 Newton method; Rational functions; Fatou sets and Julia sets.\\
 AMS Subject Classification: 37F10, 65H05
\section{Introduction}
The Newton map of a rational function $R$ is given by $N_R(z)=z-\frac{R(z)}{R'(z)}$. Its study is a classical topic of research for the case when $R$ is a polynomial. The iteration of the  Newton map of a polynomial leads to a useful and well-known way of finding the roots of the polynomial. There is a large body of literature on this subject (See for example~\cite{campos,hubbard,lei,von}), some of them going beyond polynomial root finding. In particular, this has  served as a motivation for  {Complex Dynamics}, the study of iteration of rational  functions in general. 
In spite of this, the Newton maps of non-polynomial rational functions seem to remain under-investigated. The Newton map of such a rational function differs from that of a polynomial in several important ways. Probably the most obvious one is that $N_R$ can have finite extraneous fixed points (these are precisely the poles of $R$). Another difference arising due to the same reason is that the degree of $N_R$ can be bigger than the number of distinct roots of $R$. Indeed, the number of distinct poles of $R$ also contributes to the degree of $N_R$ (please see Section 2 for the exact statement).
\par  Buff and Henriksen~\cite{2} studied Konig's methods of which the Newton method is a particular case. Their work is mostly focused on polynomials with several remarks on rational functions. 
\par  Barnard et al. \cite{Conj} described all the rational functions $R$ whose Newton maps are quadratic.  Two rational functions $f_1$ and $f_2$ are called conformally conjugate if there is a M\"{o}bius map $\phi$ such that $f_1 =\phi \circ f_2 \circ \phi^{-1}$. If a quadratic Newton map is conformally conjugate to a polynomial then the polynomial is $ z^2+c$ for some $c\in \mathbb{C}$. Barnard and co-authors observed that  this  $c$ belongs to $\interval[{0,\frac{1}{4}})$ and its Julia set is connected. In order to proceed with the discussion  we need few definitions. For a non-constant rational function $f$ with degree at least two, its Fatou set is defined as $\mathcal{F}(f)=\{z\in \widehat{\mathbb{C}}: \text{ $z$ has a neighborhood where} ~\{f^n\}_{n>0}~ \text{is normal}\}$. It is denoted by $\mathcal{F}(f)$. The Julia set, denoted by $\mathcal{J}(f)$, is the complement of $\mathcal{F}(f)$ in the extended complex plane $\widehat{\mathbb{C}}$. By dynamics of a rational function we mean its Fatou set and Julia set. A point $z_0$ is called a fixed point of a rational function $f$ if $f(z_0)=z_0.$ Its multiplier $\lambda$ is defined as $f'(z_0)$ if $z_0 \in \mathbb{C}$ and as $g'(0)$ when $z_0 =\infty$, where $g(z)=\frac{1}{f(\frac{1}{z})}$. A fixed point $z_0$ is called attracting or repelling if $|\lambda|<1$ or $|\lambda|>1$ respectively. An attracting fixed point is called superattracting if its multiplier is $0$. It is called indifferent  if $|\lambda|=1$. If $\lambda$ is an $n$-th root of unity then the fixed point is called rationally indifferent. A fixed point of $N_R$ is called extraneous if it is not a root of $R$. The Fatou set is an open set by definition. A component $U$ of the Fatou set of $f$ is called an invariant attracting domain if it contains an attracting fixed point of $f$. An invariant attracting domain $U$ is called completely invariant if its full pre-image $f^{-1}(U)$ is contained in $U$. Resuming the discussion of the work of Barnard et al., it is not difficult to see that the Fatou set of $z^2+c$ is  the union of two completely invariant attracting domains and the Julia set is a Jordan curve for $c\in \interval[{0,\frac{1}{4}})$.  Since the Julia set of two conformally conjugate rational functions are the same up to the application of a M\"{o}bius map (See Lemma~\ref{conjugacy} for exact statement), the Julia set of a quadratic $N_R$ which is conformally conjugate to some polynomial is a Jordan curve.
\par 
 We consider all the quadratic Newton maps including those conformally conjugate to  polynomials and describe their dynamics completely. It is seen that the Julia set is either a Jordan curve or totally disconnected (i.e., each component of the Julia set is singleton). Though the Julia sets of quadratic rational functions are known to be either connected or totally disconnected  ~\cite{Mil01}, a connected Julia set is not necessarily a Jordan curve. For example, the Julia set of $z^2-1$ is connected but not a Jordan curve ~\cite{Be}. The precise statement of what we prove is the following.
\begin{theorem} Let $R$ be a  rational function such that its Newton map $N_R$ is quadratic. Then $N_R$ is conformally conjugate either to $N_1(z)=z-\frac{1}{\frac{d_1}{z}+\frac{d_2}{z-1}}$ or to $N_2(z)=z+\frac{1}{\frac{e_1}{z}+\frac{e_2}{z-1}}$ for some natural numbers $d_1, d_2, e_1, e_2$.
	\begin{enumerate}
		\item If $N_R$ is conformally conjugate to $N_1$ then the Fatou set of $N_R$ is the union of two completely invariant attracting domains and the Julia set is a Jordan curve.
		\item If $N_R$ is conformally conjugate to $N_2$ then the Fatou set  of $N_R$ is equal to a completely invariant attracting domain and the Julia set is totally disconnected.
	\end{enumerate}
	In other words, the Julia set of $N_R$ is either a Jordan curve or totally disconnected.
	\label{main} 
\end{theorem}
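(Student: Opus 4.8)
The plan is to first pin down the two normal forms by a fixed‑point count, and then analyse the dynamics of $N_1$ and $N_2$ separately.

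\emph{Normal forms.} A quadratic rational map has exactly three fixed points, and for a Newton map each is non‑degenerate: at a root of $R$ of multiplicity $m$ the multiplier is $1-\tfrac1m$, at a pole of order $k$ it is $1+\tfrac1k$, and at $\infty$ (when it is neither) the local analysis again yields one of these two shapes. Thus for each fixed point $\zeta_i$ the quantity $\iota_i:=\tfrac1{1-\lambda_i}$ is a nonzero integer --- positive for a ``root‑type'' point ($\lambda_i\in[0,1)$, attracting) and negative for a ``pole‑type'' point ($\lambda_i\in(1,2]$, repelling). The holomorphic fixed point formula gives $\iota_1+\iota_2+\iota_3=1$, and since each $\iota_i\ge 1$ or $\iota_i\le -1$ the only options are two root‑type and one pole‑type, or one root‑type and two pole‑type. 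Conjugating by the Möbius map sending, in the first case, the two attracting fixed points to $0,1$ and the repelling one to $\infty$ (and symmetrically in the second case), and using that a quadratic rational map fixing $0,1,\infty$ is determined by two of its multipliers, one checks the conjugate is exactly $N_1$ with $d_i=\iota_i$ (resp.\ $N_2$ with $e_i=-\iota_i$), all natural numbers. Hence $N_R\sim N_1$ precisely when $N_R$ has two attracting fixed points, and $N_R\sim N_2$ precisely when it has one.

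\emph{The case $N_R\sim N_1$.} Here $N_1$ is the Newton map of the polynomial $z^{d_1}(z-1)^{d_2}$, with attracting fixed points $0,1$ and repelling fixed point $\infty$. Let $B_0,B_1$ be their immediate basins. By Fatou's theorem each contains a critical point, and $N_1$ has only two critical points, so each $B_i$ contains exactly one; therefore $N_1|_{B_i}\colon B_i\to B_i$ is proper of degree $2$, whence $B_i$ is simply connected (Riemann--Hurwitz) and completely invariant. The postcritical set lies in $B_0\cup B_1$, so there are no further non‑repelling cycles and no rotation domains, and $\mathcal F(N_1)=B_0\sqcup B_1$, $\mathcal J(N_1)=\partial B_0=\partial B_1$. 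This Julia set is connected (boundary of a simply connected domain) and, $N_1$ being hyperbolic, locally connected; the Carathéodory boundary extensions of the Riemann maps of $B_0,B_1$ are then defined, and a cut‑point argument (a point identified by the boundary map of $B_0$ would separate $\widehat{\mathbb C}\setminus\overline{B_0}=B_1$ from itself) shows these maps are injective --- alternatively one performs a quasiconformal surgery turning the attracting fixed points superattracting, reducing to $z\mapsto z^2$ with circular Julia set. Either way $\mathcal J(N_1)$ is a Jordan curve.

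\emph{The case $N_R\sim N_2$.} Now $N_2$ has an attracting fixed point at $\infty$ (multiplier $\tfrac{e}{e+1}$, $e=e_1+e_2$) and repelling fixed points $0,1$. A direct computation shows the numerator of $N_2'$ is $e(e+1)z^2-2e_1(e+1)z+e_1(e_1+1)$, with discriminant $-4e_1e_2(e+1)<0$; hence the two critical points form a non‑real complex‑conjugate pair. Since $N_2$ has real coefficients, conjugation preserves $\mathcal F(N_2)$ and fixes $\infty$, so the immediate basin $A_0$ of $\infty$ is conjugation‑symmetric; as it contains one critical point (Fatou), it contains both. Thus the postcritical set lies in $A_0$, there are no other non‑repelling cycles or rotation domains, $N_2|_{A_0}$ has degree $2$, $A_0$ is completely invariant, and $\mathcal F(N_2)=A_0$. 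If $\mathcal J(N_2)$ were connected, $A_0$ would be simply connected (complement of a connected compact set), and Riemann--Hurwitz for the degree‑$2$ map $N_2|_{A_0}$ carrying its two critical points would force $\chi(A_0)=2\cdot1-2=0\neq1$. So $\mathcal J(N_2)$ is disconnected, hence by the connected/totally‑disconnected dichotomy for quadratic rational maps it is totally disconnected.

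\emph{Main obstacle.} The only genuinely non‑formal point is the Jordan‑curve conclusion in the $N_1$ case: upgrading ``$\mathcal J(N_1)$ is a connected, locally connected Julia set bounding two complementary simply connected domains'' to ``$\mathcal J(N_1)$ is a simple closed curve''. Making the cut‑point/prime‑end argument (or, equivalently, the quasiconformal surgery reducing to $z\mapsto z^2$) airtight is where the real work lies; in the $N_2$ case the corresponding delicate step is the negativity of the discriminant, which is exactly what pushes both critical orbits into the single attracting basin.
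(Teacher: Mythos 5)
Your proposal is correct and its overall architecture coincides with the paper's: reduce to two normal forms determined by the fixed-point multipliers, then locate the two critical points inside invariant attracting domains and invoke the standard criteria for a Jordan-curve versus a totally disconnected Julia set. The differences are local. For the normal forms, the paper enumerates the admissible shapes of $R$ (the cases $(m,n)=(2,0),(0,2),(1,1),(1,2),(2,1)$) and matches multiplier sets, whereas you extract the dichotomy ``two attracting, one repelling'' versus ``one attracting, two repelling'' directly from the residue indices and the Rational Fixed Point Theorem; both arguments ultimately rest on the same rigidity statement (Buff--Henriksen, Lemma~\ref{Buff}/Lemma~\ref{quadratic}), and yours is a touch more economical. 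For the dynamics, the paper simply quotes Brolin's lemma (two completely invariant attracting domains containing all critical points $\Rightarrow$ Jordan curve) and Beardon's lemma (one invariant attracting domain containing all critical points $\Rightarrow$ totally disconnected), so the step you flag as the ``main obstacle'' is, in the paper, a citation rather than an argument; your surgery/prime-end sketch is a legitimate but unnecessary re-derivation, and your Riemann--Hurwitz contradiction in the $N_2$ case replaces the cited Cantor-set lemma. One point where your version is actually sharper than the paper's: in the $N_2$ case the paper asserts that the critical points, being roots of a real quadratic, are complex conjugates of each other, which as stated would fail if they were real and distinct; your explicit computation of the discriminant $-4e_1e_2(e_1+e_2+1)<0$ is exactly what is needed to make that step airtight.
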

In \cite{MS}, Shishikura proved that if the Julia set of a rational function $f$ is disconnected then $f$ has at least two weakly repelling fixed points (i.e., whose multiplier is $1$ or is with absolute value bigger than  $1$). Since the Newton map $N_P$ of every polynomial $P$ has only one repelling fixed point, it follows from Shishikura's result that the Julia set of $N_P$ is connected. Since every pole of a rational function is a repelling fixed point of its Newton map (See Proposition~\ref{NFP}), there is no reason why the Julia set of such  a Newton map will be connected. Indeed, it is found to be false even for quadratic Newton maps. This is part of  Theorem~\ref{main}(2).
\par 
 By analyzing the nature of fixed points of Newton maps, Barnard et al. proved that no Newton map of any rational function is conformally conjugate to $z^3$~\cite{Conj}. As a significant generalization, we are able to prove that no Newton map of degree at least three is conformally conjugate to a unicritical polynomial (i.e., a polynomial with exactly one finite critical point). This is Theorem~\ref{unicritical} appearing in Section 4 of this article. This result is sharp in the sense that a cubic Newton map can be conformally conjugate to a polynomial with two finite critical points.
  The Julia sets of all such Newton maps are shown to be closed curves. More precisely, the following is proved.  
	\begin{theorem}
	If a cubic Newton map is conformally conjugate to a polynomial then its Fatou set  is $ A^*  \bigcup A$, where $A^*$ is the completely invariant attracting domain corresponding to a superattracting fixed point and $A$ is one of the following.
	\begin{enumerate} 
		
		\item $A$ is the union of two invariant attracting domains corresponding to  two finite attracting fixed points. The  Fatou set is the union of infinitely many components and each is simply connected. The Julia set is a self intersecting closed  curve. 
		\item $A$ is the completey invariant attracting domain corresponding to a finite attracting fixed point. In this case, the Julia set is a Jordan curve.
	\end{enumerate}
\label{cubicpoly}
\end{theorem}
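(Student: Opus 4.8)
The plan is to push the problem to the polynomial side via the conjugating M\"obius map, settle the structure of that polynomial's dynamics using the rational fixed--point (holomorphic index) formula together with Proposition~\ref{NFP}, and then read the conclusion back through Lemma~\ref{conjugacy}. A cubic rational map is conformally conjugate to a polynomial precisely when it has a fixed point of local degree $3$, and for a cubic Newton map such a fixed point can only be a simple zero $r$ of $R$ with $R''(r)=0$ (poles and multiple zeros of $R$ are not critical points of $N_R$, and $N_R$ is never itself a polynomial). So if $N_R$ is conjugate to a polynomial there is a M\"obius $\phi$ with $\phi(r)=\infty$ and $P:=\phi\circ N_R\circ\phi^{-1}$ a cubic polynomial whose superattracting fixed point $\infty$ is totally ramified; by Theorem~\ref{unicritical}, $P$ is not unicritical, so it has two distinct simple finite critical points $c_1,c_2$. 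Since $\infty$ has local degree $3=\deg P$, its immediate basin $A_\infty$ maps onto itself with degree $3$, hence $P^{-1}(A_\infty)=A_\infty$; Riemann--Hurwitz then forces $A_\infty$ to contain no other critical point and to be simply connected, so $B(\infty)=A_\infty$, the orbits of $c_1,c_2$ are bounded, and $\mathcal J(P)$ is connected. Setting $A^*:=\phi^{-1}(A_\infty)$ gives the completely invariant immediate superattracting basin of $N_R$ in the statement (the immediate basin of the root $r$); what remains is to analyse the bounded Fatou dynamics of $P$, i.e.\ the orbits of $c_1$ and $c_2$.

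For the finite fixed points $w_1,w_2,w_3$ of $P$, the index formula gives $\sum_{i=1}^{3}\frac{1}{1-P'(w_i)}=0$ (recall that $\infty$ is superattracting, contributing $1$ to the total index $1$). By Proposition~\ref{NFP}, under $\phi^{-1}$ each $w_i$ is either a zero of $R$ of some multiplicity $m_i$, with $\frac{1}{1-P'(w_i)}=m_i\ge 1$, or an extraneous fixed point of $N_R$ (a pole of $R$ of order $p_i$), with $\frac{1}{1-P'(w_i)}=-p_i\le -1$; hence these three indices are nonzero integers summing to $0$, so $P$ has exactly one or exactly two attracting finite fixed points, and any $w_i$ of index $1$ is a simple zero of $R$, i.e.\ a superattracting fixed point of $N_R$ of local degree $2$ (local degree $3$ is impossible since a cubic polynomial has only one totally ramified fixed point), hence equals $c_1$ or $c_2$. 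Suppose first that $P$ has two attracting finite fixed points $w_1,w_2$. Then $\infty,w_1,w_2$ are attracting fixed points, each of whose immediate basins must carry a critical point; as the only critical points are $\infty,c_1,c_2$, these are forced one per basin (inside the immediate basin when the fixed point is not superattracting, and otherwise the fixed point is itself the critical point), leaving no room for any further cycle or rotation domain. Hence $P$ is hyperbolic, every Fatou component is an iterated preimage of $A_\infty,A_{w_1},A_{w_2}$ (so simply connected by Riemann--Hurwitz), and there are infinitely many of them since no $A_{w_i}$ is completely invariant; $\mathcal J(P)$ is connected and locally connected. This is case~(1).

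Suppose instead that $P$ has a single attracting finite fixed point $w_1$; then its index is $\ge 2$, so $w_1$ is a multiple zero of $R$, not a critical point, and its immediate basin $A_{w_1}$ must contain a critical point. The heart of the argument --- and the step I expect to be the main obstacle --- is to show that $A_{w_1}$ contains \emph{both} $c_1$ and $c_2$; equivalently, the remaining free critical point converges to $w_1$ through $A_{w_1}$, so that it does not land in a strictly preperiodic component of $B(w_1)$, is not captured by an attracting or parabolic cycle of period $\ge 2$, does not lie in a rotation domain, and does not lie on $\mathcal J(P)$. The critical--point budget alone does not exclude, for instance, an attracting $2$--cycle carrying the free critical point, so here I would fall back on the explicit normal forms for cubic Newton maps obtained earlier in the paper: after normalising $r$ and the relation $R''(r)=0$ by an affine change of variable these maps form a one--complex--parameter family of concrete $N_R$, for which the critical orbit can be analysed directly (e.g.\ by locating the free critical point relative to the fixed points, or by a monotonicity argument) to give convergence to $w_1$; equivalently, one shows that no cubic Newton map conjugate to a polynomial has an attracting or parabolic cycle of period $\ge 2$ or a rotation domain. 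Granting this, $P|_{A_{w_1}}$ has degree $3=\deg P$, so $A_{w_1}$ is completely invariant and simply connected; then $A_\infty\cup A_{w_1}$ contains all critical points, both components are completely invariant, and (there being no wandering domains and no room for a further periodic cycle) $\mathcal F(P)=A_\infty\cup A_{w_1}$ with $P$ hyperbolic. This is case~(2).

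Finally, the curve statements follow from the structure just obtained. In both cases $P$ is hyperbolic and $\mathcal J(P)$ is connected, hence locally connected; the B\"ottcher coordinate of $A_\infty$, which is global on $A_\infty$ and conjugates $P|_{A_\infty}$ to $w\mapsto w^{3}$, extends continuously to the bounding circle and gives a continuous surjection of a circle onto $\partial A_\infty=\mathcal J(P)$, so $\mathcal J(P)$ is a closed curve. In case~(2), $\mathcal J(P)=\partial A_\infty=\partial A_{w_1}$ is the common boundary of the two complementary simply connected domains $A_\infty$ and $A_{w_1}$ and, being locally connected, is a Jordan curve. In case~(1), the Fatou set has more than two components, so $\mathcal J(P)$ cannot be a Jordan curve, and being a closed curve it is self--intersecting. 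Transporting everything back through $\phi$ using Lemma~\ref{conjugacy} gives the statement for $N_R$, and Theorem~\ref{unicritical} ensures that these two cases exhaust all cubic Newton maps conformally conjugate to a polynomial.
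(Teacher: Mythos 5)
Your overall strategy is the same as the paper's: conjugate to a normalized cubic polynomial $P$, use the Rational Fixed Point Theorem to see that the three finite residue indices are nonzero integers summing to $0$, hence that $P$ has exactly one or exactly two finite attracting fixed points, and then split into the two cases. Your treatment of the two-attracting-fixed-points case is essentially the paper's argument and is fine.

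The problem is the second case, and it is exactly the step you yourself flag as ``the main obstacle'': showing that the immediate basin $A_{w_1}$ of the unique finite attracting fixed point contains \emph{both} finite critical points, i.e.\ that the free critical point is not captured by an attracting or parabolic cycle of period $\ge 2$, a rotation domain, or a strictly preperiodic component. You do not prove this; you propose to ``fall back on the explicit normal forms'' and analyse the critical orbit ``directly (e.g.\ by locating the free critical point relative to the fixed points, or by a monotonicity argument)'', and then write ``Granting this, \dots''. That is a placeholder, not an argument, and it is not a routine verification: a priori a one-parameter family of cubic polynomials with one finite attracting fixed point can perfectly well contain maps with an attracting $2$-cycle absorbing the free critical point, so something specific to Newton maps must be used. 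The paper closes this gap with a short arithmetic-plus-symmetry argument that your index bookkeeping has already set up: writing $P(z)=z^3+az+b$ and $n_i=\frac{1}{1-P'(z_i)}$, one computes $a=1+\tfrac13\bigl(\tfrac{1}{n_1}+\tfrac{1}{n_2}+\tfrac{1}{n_3}\bigr)$; in case (2) one has $n_2,n_3<0$ and $n_1=-n_2-n_3$, whence $-2\le \tfrac{1}{n_1}+\tfrac{1}{n_2}+\tfrac{1}{n_3}<0$ and so $\tfrac13\le a<1$. Thus $a$ is real and positive, the finite critical points $\pm i\sqrt{a/3}$ are complex conjugates of each other, the relations $3z_i^2+a=1-\tfrac{1}{n_i}$ force $z_2,z_3$ (hence $z_1$ and $b$) to be real, and $P(\overline z)=\overline{P(z)}$. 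The immediate basin of the real point $z_1$ is then invariant under complex conjugation, so by Lemma~\ref{basic} it contains one finite critical point and therefore both; Riemann--Hurwitz gives complete invariance and Lemma~\ref{Jordancurve} gives the Jordan curve. You should replace your ``granting this'' with this computation (or an equivalent one).

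A minor but genuine error in your setup: your parenthetical claim that ``$N_R$ is never itself a polynomial'' is false --- the paper's own example $N_R(z)=z^3+\tfrac34 z$ for $R(z)=\frac{z^4}{(z-0.5)^2(z+0.5)^2}$ is a polynomial Newton map, and Table~\ref{table} lists two such subcases. Consequently the totally ramified superattracting fixed point need not be a finite simple root $r$ of $R$; it may be $\infty$ (an extraneous fixed point, superattracting when $d=e$ by Proposition~\ref{NFP}). This does not affect the dynamical argument, which takes place on the polynomial side, but your identification of $A^*$ with ``the immediate basin of the root $r$'' is wrong in those cases and should be dropped.
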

The two possible Julia sets as stated in Theorem~\ref{cubicpoly}  are given in Figures~\ref{Jordan} and \ref{closedcurve}.
   
\par
  Section 2 discusses the nature of the fixed points of Newton maps. The Rational Fixed point Theorem and the Characterization theorem for Newton maps are presented in this section. Some basic facts about the Fatou set are also given in this section. All quadratic Newton maps  and their dynamics is dealt in Section 3. This section contains the proof of Theorem~\ref{main}. In Section 4, some observations are made on Newton maps of higher degree with special focus on the cubic case. All rational functions $R$ for which $N_R$ is cubic and is conjugate to a polynomial is given in  Table~\ref{table} with a detailed discussion in the last section.  
\par 
 Throughout this article, by saying two rational maps are conjugate we mean that they are  conformally conjugate. The composition of two functions $f, g$ is denoted by  $f \circ g$.
 
 \section{Preliminaries}
 
\subsection{Some basic properties of $N_R$}
Let $R(z)=\frac{P(z)}{Q(z)}$ be a rational function where $P$ and $Q$ are polynomials of degrees $d$ and $e$ respectively with no common factor. Let $P$ have $m$ distinct roots, $\alpha_i, i=1,2,...,m$, and the distinct roots of $Q$ be $\beta_j,j=1,2,...,n$. Then
$$P'(z)=P(z)\sum_{i=1}^{m}\frac{d_i}{z-\alpha_i}~ \text{ and }~
Q'(z)=Q(z)\sum_{j=1}^{n}\frac{e_j}{z-\beta_j}$$ where $d_i$ and $e_j$ are the multiplicities of $\alpha_i$ and $\beta_j$ respectively. Therefore $\sum_{i=1}^{m}d_i=d$, $\sum_{j=1}^{n}e_j=e$ and
\begin{align*}
R'(z)&=\frac{P'(z)Q(z)-P(z)Q'(z)}{Q^2(z)} \\
&=\frac{P(z)Q(z)\left[\sum_{i=1}^{m}\frac{d_i}{z-\alpha_i}-\sum_{j=1}^{n}\frac{e_j}{z-\beta_j}\right]}{Q^2(z)}\\
&=R(z)\left[\sum_{i=1}^{m}\frac{d_i}{z-\alpha_i}-\sum_{j=1}^{n}\frac{e_j}{z-\beta_j}\right].
\end{align*} This implies that

\begin{equation}\label{2.2}
N_R(z)=z-\frac{1}{\sum_{i=1}^{m}\frac{d_i}{z-\alpha_i}-\sum_{j=1}^{n}\frac{e_j}{z-\beta_j}}.
\end{equation}
The degree of $N_R$ can be expressed in terms of the number of distinct roots and poles of $R$. From Equation (\ref{2.2}), we get
$ \label{2.3}
\nonumber
N_R(z)= z-\frac{\prod_{i=1}^{m}(z-\alpha_i)\prod_{j=1}^{n}(z-\beta_j)}
{\tilde{A}-\tilde{B}}
$
where $\tilde{A}=\prod_{j=1}^{n}(z-\beta_j) \left[ \sum_{i=1}^{m} \{ d_i\prod_{k=1,k\neq i}^{m}(z-\alpha_k) \} \right] $ and \\
 $\tilde{B}= \prod_{i=1}^{m}(z-\alpha_i) \left[\sum_{j=1}^{n}\{ e_j\prod_{l=1,l\neq j}^{n}(z-\beta_l)\} \right]$. 
Consequently,
$$N_R (z) = \frac{(d-e-1)z^{m+n}+A_0z^{m+n-1}+...+A_{m+n-1}}{(d-e)z^{m+n-1}+B_0z^{m+n-2}+...+B_{m+n-2}}$$ for some complex numbers $A_0, A_1,A_2,..., A_{m+n-1}$ and $B_0,B_1, B_2,...B_{m+n-2}$.

Hence, we get
\begin{equation}\label{2.4}
deg(N_R)=\begin{cases}
m+n-1  &\mbox{if} \hspace{0.3 cm}  d=e+1 \\
m+n  &\mbox{if} \hspace{0.3 cm}   d\neq e+1 
\end{cases}. 
\end{equation}
 Note that a rational function of degree $k$ has $k+1$ fixed points counting multiplicities. 
 Now we determine all the fixed points of $N_R$ and their multipliers. 
 Since $\infty$ can also be a fixed point of an $N_R$, we need the following proposition dealing with such a situation (Page 41, ~\cite{Be}).
\begin{Proposition} \label{infty}
	Let $ f(z)=\frac{a_0z^k+a_1z^{k-1}+\dots +a_k}{b_0z^l+b_1z^{l-1}+\dots +b_l} $ be a rational function. 
	Then $\infty$ is a fixed point of $f$ if and only if $k > l.$
	 Moreover, the multiplier $\lambda$ of $\infty$ is given by 
	$$\lambda=
	\begin{cases}
	\frac{b_0}{a_0}  &\text{if } k=l+1\\
	0  & \text{if } k>l+1 .
	\end{cases} $$
\end{Proposition}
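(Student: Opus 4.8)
The statement is elementary and follows directly from the definition of the multiplier at $\infty$, so the plan is purely computational. Recall that by the convention fixed in the excerpt, $\infty$ is a fixed point of $f$ exactly when $0$ is a fixed point of $g(z):=\dfrac{1}{f(1/z)}$, and in that case the multiplier $\lambda$ of $\infty$ equals $g'(0)$. Thus the whole proof reduces to writing $g$ in a convenient closed form near the origin and reading off its behaviour there.

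First I would compute $g$ explicitly. Substituting $z\mapsto 1/z$, factoring out the smallest power of $z$ in the numerator and in the denominator, and then taking the reciprocal gives
\[
g(z)=\frac{1}{f(1/z)}=z^{\,k-l}\,\frac{b_0+b_1z+\dots+b_lz^l}{a_0+a_1z+\dots+a_kz^k}=z^{\,k-l}\,h(z),
\]
where $h(z):=\dfrac{b_0+b_1z+\dots+b_lz^l}{a_0+a_1z+\dots+a_kz^k}$. Since the numerator of $f$ has degree exactly $k$, we have $a_0\neq 0$, so $h$ is holomorphic in a neighbourhood of $0$ and $h(0)=b_0/a_0\neq 0$. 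Now the fixed-point assertion is immediate: if $k>l$ then $g(0)=0$, so $\infty$ is fixed; if $k=l$ then $g(0)=h(0)=b_0/a_0\neq 0$; and if $k<l$ then $g$ has a pole at $0$. (Equivalently, one may argue directly that $f(\infty)=\lim_{z\to\infty}f(z)$ equals $\infty$, $a_0/b_0$, or $0$ according as $k>l$, $k=l$, or $k<l$.) Hence $\infty$ is a fixed point of $f$ if and only if $k>l$.

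For the multiplier, assume $k>l$ and differentiate $g(z)=z^{\,k-l}h(z)$ at $z=0$. If $k=l+1$ then $g(z)=z\,h(z)$, so $g'(0)=h(0)=b_0/a_0$; if $k>l+1$ then $k-l\geq 2$, the factor $z^{\,k-l}$ vanishes to order at least two, and $g'(0)=0$. This is exactly the claimed formula for $\lambda$. I do not expect any real obstacle here: the only points that need a moment's care are justifying $a_0\neq 0$ (which is built into the hypothesis that $f$'s numerator has degree $k$, and is what makes $h$ genuinely holomorphic and non-vanishing at $0$) and being consistent with the chart $z\mapsto 1/z$ used to define the multiplier at $\infty$, a convention already fixed earlier in the paper.
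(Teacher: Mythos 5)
Your proof is correct, and it is the standard chart computation $g(z)=1/f(1/z)=z^{k-l}h(z)$ with $h(0)=b_0/a_0\neq 0$. The paper itself offers no proof of this proposition --- it is quoted from Beardon (p.\ 41) --- so there is nothing to compare against; your argument is exactly the expected one, with the one implicit point being that $b_0\neq 0$ (needed so that $h(0)\neq 0$ in the case $k=l$, and so that the multiplier $b_0/a_0$ in the case $k=l+1$ is genuinely the leading behaviour) follows from the denominator having degree exactly $l$, just as $a_0\neq 0$ follows from the numerator having degree exactly $k$.
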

 The following result is a special case of Proposition $1$ in \cite{2} (for $\sigma=2$). However, we give a proof for completeness.
\begin{Proposition}\label{NFP}
Let $\{ \alpha_1, \alpha_2,...,\alpha_m \} $ and $\{ \beta_1, \beta_2,...,\beta_n \}  $ be the sets of all distinct finite roots and distinct finite poles of a rational function $R$ respectively. 
	
	\begin{enumerate}
		\item If $d_i$ is the multiplicity of $\alpha_i$ then  $\alpha_i$ is an attracting fixed point of $N_R$ with multiplier $\frac{d_i-1}{d_i}$.
	\item If  $e_j$ is the multiplicity of $\beta_j$  then  $\beta_j$ is a repelling fixed point of $N_R$ with multiplier $\frac{e_j+1}{e_j} $.
\item The point at $\infty$ is a fixed point of $N_R$ if and only if $d\neq e+1$ where $d=\sum_{i=1} ^m d_i$ and $e=\sum_{j=1} ^n e_j$, and its multiplier is $ \frac{d-e}{d-e-1}. $ Further, it is  attracting if $d \leq e$ (superattracting if $d=e$) and repelling if $d>e$.
	\end{enumerate}
\end{Proposition}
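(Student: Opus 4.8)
The computational engine will be the identity $N_R'(z)=\dfrac{R(z)R''(z)}{R'(z)^2}$, obtained by writing $N_R=\mathrm{id}-R/R'$ and differentiating; with this in hand every finite multiplier becomes a local order count. First I would check that the $\alpha_i$ and $\beta_j$ really are fixed points. Since $N_R(z)-z=-R(z)/R'(z)$, it is enough that $R/R'$ vanish there: at a root $\alpha_i$ of multiplicity $d_i$, $R$ has a zero of order $d_i$ and $R'$ a zero of order exactly $d_i-1$, so $R/R'$ has a simple zero; at a pole $\beta_j$ of multiplicity $e_j$, $R$ has a pole of order $e_j$ and $R'$ a pole of order $e_j+1$, so again $R/R'$ has a simple zero. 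Hence $N_R(\alpha_i)=\alpha_i$ and $N_R(\beta_j)=\beta_j$ (this can also be read off \eqref{2.2} directly, since the relevant term in the bracketed sum blows up there).

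For (1), factor $R(z)=(z-\alpha_i)^{d_i}g(z)$ with $g(\alpha_i)\ne 0$ and expand $R,R',R''$ about $\alpha_i$: their leading terms are $g(\alpha_i)$, $d_i g(\alpha_i)$, $d_i(d_i-1)g(\alpha_i)$ times the appropriate power of $(z-\alpha_i)$. Substituting into the identity and letting $z\to\alpha_i$ gives $N_R'(\alpha_i)=\dfrac{d_i(d_i-1)}{d_i^{\,2}}=\dfrac{d_i-1}{d_i}\in[0,1)$, so $\alpha_i$ is attracting (superattracting exactly for a simple root). Part (2) is the identical computation with the exponent $d_i$ replaced by $-e_j$ in the factorization $R(z)=(z-\beta_j)^{-e_j}h(z)$, $h(\beta_j)\ne 0$; it gives $N_R'(\beta_j)=\dfrac{(-e_j)(-e_j-1)}{(-e_j)^2}=\dfrac{e_j+1}{e_j}>1$, so $\beta_j$ is repelling.

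For (3) I would work directly from \eqref{2.2} near $\infty$. Expanding, $\sum_i\frac{d_i}{z-\alpha_i}-\sum_j\frac{e_j}{z-\beta_j}=\frac{d-e}{z}+O(z^{-2})$. If $d\ne e$ this yields $N_R(z)=\frac{d-e-1}{d-e}\,z+O(1)$ as $z\to\infty$; passing to the coordinate $w=1/z$ and invoking Proposition~\ref{infty} (or computing $g(w)=1/N_R(1/w)$ directly) shows that $\infty$ is a fixed point precisely when the leading coefficient $\frac{d-e-1}{d-e}$ is nonzero, i.e. when $d\ne e+1$, with multiplier $\frac{d-e}{d-e-1}$. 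If $d=e$ the $z^{-1}$ terms cancel, $N_R(z)$ grows like a multiple of $z^2$ (or higher), so $\infty$ is a superattracting fixed point, consistent with $\frac{d-e}{d-e-1}=0$. Finally $\bigl|\frac{d-e}{d-e-1}\bigr|<1$ iff $d-e\le 0$ and $>1$ iff $d-e\ge 2$, which gives the attracting / superattracting / repelling trichotomy.

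The local expansions in (1) and (2) are routine; the only place needing care is the behaviour at $\infty$ in (3), where one must keep the three regimes $d=e$, $d=e+1$, and $d\notin\{e,e+1\}$ separate (the expression $\frac{d-e}{d-e-1}$ degenerating to $0/(-1)$ in the first and to $1/0$ in the second), and must be sure that writing $N_R$ as the explicit ratio of polynomials appearing before \eqref{2.4} is not obscured by a cancellation of common factors --- which it cannot be, since cancellation leaves the value of $N_R$ near $\infty$ unchanged.
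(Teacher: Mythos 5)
Your proof is correct, and it reaches the multipliers by a somewhat different computational route than the paper. For parts (1) and (2) the paper differentiates the partial--fraction form \eqref{2.2} to get $N_R'(z)=1-\bigl(\sum_i d_i/(z-\alpha_i)^2-\sum_j e_j/(z-\beta_j)^2\bigr)\big/\bigl(\sum_i d_i/(z-\alpha_i)-\sum_j e_j/(z-\beta_j)\bigr)^2$, clears denominators with $A(z)=\prod(z-\alpha_i)$ and $B(z)=\prod(z-\beta_j)$, and evaluates directly at $\alpha_i$ and $\beta_j$; you instead use the identity $N_R'=RR''/(R')^2$ together with local order counting in the factorizations $R=(z-\alpha_i)^{d_i}g$ and $R=(z-\beta_j)^{-e_j}h$. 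Your identity is the cleaner tool --- it makes the multiplier a pure statement about local orders of $R$ and its derivatives and avoids the paper's bookkeeping with $A$ and $B$ --- though one should note that when $d_i=1$ the stated ``leading term'' $d_i(d_i-1)g(\alpha_i)$ of $R''$ vanishes, so the limit $N_R'(\alpha_i)=0$ comes from $RR''$ having a zero while $(R')^2$ does not, rather than from a genuine leading-order ratio; the conclusion is unaffected. For part (3) the paper reads off the degrees and leading coefficients of the numerator and denominator of $N_R$ (computed just before \eqref{2.4}) and quotes Proposition~\ref{infty}, whereas you expand the logarithmic derivative at infinity as $(d-e)/z+O(z^{-2})$ and extract $N_R(z)=\frac{d-e-1}{d-e}z+O(1)$ directly; these are equivalent, and your care in separating the regimes $d=e$, $d=e+1$, and $d\notin\{e,e+1\}$ matches what Proposition~\ref{infty} encodes. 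Both arguments are elementary and complete; yours generalizes more readily (the identity $N_R'=RR''/(R')^2$ holds for any meromorphic $R$), while the paper's sets up the explicit formulas for $N_R$ and $N_R'$ that it reuses later.
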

\begin{proof}
	From Equation (\ref{2.2}) it is clear that   $$N_R'(z)= 1-\dfrac{\sum_{i=1}^{m}\frac{d_i}{(z-\alpha_i)^2}-\sum_{j=1}^{n}\frac{e_j}{(z-\beta_j)^2}}{\left[\sum_{i=1}^{m}\frac{d_i}{z-\alpha_i}-\sum_{j=1}^{n}\frac{e_j}{z-\beta_j}\right]^2}.$$
	Let $A(z)=\prod_{i=1}^{m}(z-\alpha_i)$ and $B(z)=\prod_{j=1}^{n}(z-\beta_j)$. Then
	\begin{equation*}
	N_R'(z)=1-\dfrac{B(z)^2\sum_{i=1}^{m}\left[d_i \prod_{k=1,k\neq i}^{m}(z-\alpha_k)^2\right]-A(z)^2\sum_{j=1}^{n}\left[e_j\prod_{l=1,l\neq j}^{n}(z-\beta_l)^2\right]}{\left[B(z)\sum_{i=1}^{m}\{ d_i \prod_{k=1,k\neq i}^{m}(z-\alpha_k)\}-A(z)\sum_{j=1}^{n}\{e_j\prod_{l=1,l\neq j}^{n}(z-\beta_l)\} \right]^2}.
	\end{equation*}
	Hence, 
	\begin{align*}
	N_R'(\alpha_i)&=1-\dfrac{(B(\alpha_i))^2d_i\prod_{k=1,k\neq i}^{m}(\alpha_i-\alpha_k)^2}{(B(\alpha_i))^2 d_i^2\prod_{k=1,k\neq i}^{m}(\alpha_i-\alpha_k)^2}
	=\frac{d_i-1}{d_i} \qquad \text{for}~ i=1,2,...,m.
	\end{align*}
	Similarly, 
	\begin{align*}
	N_R'(\beta_j)&=1+\frac{(A(\beta_j))^2e_j\prod_{l=1,l\neq j}^{n}(\beta_j-\beta_l)^2}{(A(\beta_j))^2e_j^2\prod_{l=1,l\neq j}^{n}(\beta_j-\beta_l)^2}
=\frac{e_j+1}{e_j} \qquad \text{for}~ j=1,2,...,n.
	\end{align*}
	This completes the proof of $1$ and $2$.\\
	From Equation (\ref{2.4}) and Proposition \ref{infty}, it follows that $\infty$ is a fixed point of $N_R$ if and only if $d\neq e+1$, and its multiplier is  $ \frac{d-e}{d-e-1}$. Further, $\infty$ is attracting if $d \leq e$ (superattracting if $d=e$) and repelling if $d>e$. 
\end{proof}
 Every root of a polynomial $P $ with degree at least two, is an attracting fixed point of $N_P$. The point at $\infty$ is the only extraneous fixed point of $N_P$.  But for a rational function $R$, every finite pole turns out to be an extraneous fixed point of $N_R$. In order to make some further remarks on the above proposition, we need some definitions. The multiplicity of a fixed point of a rational function $f$ is its multiplicity as a root of $f(z)-z$. A fixed point is called simple if its multiplicity is $1$. This happens if and only if its multiplier is different from $1$~\cite{Mi}.
\begin{Remark}
For $d=e+1$, the degree of $N_R$ is $m+n-1$ and it has $m+n$ fixed points, namely $\alpha_i, i=1,2,...,m$ and $\beta_j, j=1,2,...,n$. Similarly for $d \neq e+1$, the degree of $N_R$ is $m+n$ and it has $m+n+1$ fixed points, namely $\alpha_i, i=1,2,...,m$, $\beta_j, j=1,2,...,n$ and $\infty$. In this case, $\infty$ is attracting if $d \leq  e$ and is repelling whenever $d >e$(in fact, $d > e+1$).
\end{Remark}
\begin{Remark} For every rational function $R$, the multiplier of each fixed point of $N_R$ is of the form $\frac{p}{q}$ where $p \in \mathbb{N} \bigcup \{0\}, q \in \mathbb{N}$ and $|p-q|=1$. In particular, all the fixed points of $N_R$ are simple. The multiplier is an integer only if it belongs to $\{0,2\}$. 
\label{multiplier}
\end{Remark}
 Conformal conjugacy preserves almost everything about the dynamics including the nature of fixed points.
We enumerate few such things  that are to be used repeatedly.
\begin{lemma}\cite{Be} Let $f$ and $g$ be two rational functions such that $f =\phi \circ g \circ \phi^{-1}$. Then, $\mathcal{F}(f)=\phi (\mathcal{F}(g))$ and $\mathcal{J}(f)=\phi (\mathcal{J}(g))$. Further,
\begin{enumerate} 
\item A point $z_0$ is a fixed point of $g$ if and only if $\phi(z_0)$ is a fixed point of $f $. Further, they have the same multiplier, i.e., $g '(z_0)= f'(\phi(z_0))$, and one is simple whenever the other is so. 
\item A Fatou component  $U$ of $g$ is an invariant attracting domain if and only if $\phi(U)$ is an invariant attracting domain of $f$. Further, $U$ is completely invariant if and only if $\phi(U)$ is so. 
\end{enumerate}
\label{conjugacy}
\end{lemma}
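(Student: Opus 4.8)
The plan is to extract everything from the single conjugacy identity $f\circ\phi=\phi\circ g$ (equivalently $f=\phi\circ g\circ\phi^{-1}$, with $\phi$ a M\"{o}bius transformation, hence a biholomorphic homeomorphism of $\widehat{\mathbb{C}}$), together with its immediate consequence $f^{n}=\phi\circ g^{n}\circ\phi^{-1}$ for all $n\ge 1$, proved by induction. For the identities $\mathcal{F}(f)=\phi(\mathcal{F}(g))$ and $\mathcal{J}(f)=\phi(\mathcal{J}(g))$, I would show that normality is preserved under pre- and post-composition with $\phi$: if $\{g^{n}\}$ is normal on a neighborhood $V$ of $z_{0}$, choose a subsequence $g^{n_{k}}$ converging locally uniformly (in the spherical metric) on $V$; since $\widehat{\mathbb{C}}$ is compact, $\phi$ and $\phi^{-1}$ are uniformly continuous, so $f^{n_{k}}=\phi\circ g^{n_{k}}\circ\phi^{-1}$ converges locally uniformly on $\phi(V)$, whence $\{f^{n}\}$ is normal on $\phi(V)$. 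The reverse implication is the same argument with $\phi$ and $\phi^{-1}$ interchanged. Thus $z_{0}\in\mathcal{F}(g)\iff\phi(z_{0})\in\mathcal{F}(f)$, which gives $\mathcal{F}(f)=\phi(\mathcal{F}(g))$; since $\phi$ is a bijection of $\widehat{\mathbb{C}}$, taking complements yields $\mathcal{J}(f)=\phi(\mathcal{J}(g))$.

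For item~(1), the fixed-point correspondence is purely algebraic: from $f(\phi(z_{0}))=\phi(g(z_{0}))$ and the injectivity of $\phi$ one gets $f(\phi(z_{0}))=\phi(z_{0})\iff g(z_{0})=z_{0}$. For the multiplier, I would differentiate $f\circ\phi=\phi\circ g$ at a fixed point $z_{0}$ of $g$, with $w_{0}=\phi(z_{0})$; the chain rule gives $f'(w_{0})\,\phi'(z_{0})=\phi'(z_{0})\,g'(z_{0})$, and because a M\"{o}bius map is conformal on all of $\widehat{\mathbb{C}}$ its local derivative $\phi'(z_{0})$ is finite and nonzero, so it cancels and $f'(w_{0})=g'(z_{0})$. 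Finally, a fixed point is simple precisely when its multiplier is not $1$ (the fact from~\cite{Mi} quoted above), so, the multipliers being equal, $z_{0}$ is simple exactly when $\phi(z_{0})$ is.

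For item~(2), since $\phi$ is a homeomorphism of $\widehat{\mathbb{C}}$ carrying $\mathcal{F}(g)$ onto $\mathcal{F}(f)$, it restricts to a bijection between their connected components; in particular $U$ is a Fatou component of $g$ iff $\phi(U)$ is one of $f$. By item~(1), $\phi$ sends fixed points of $g$ to fixed points of $f$ preserving multipliers, hence preserving the attracting property, and $z_{0}\in U\iff\phi(z_{0})\in\phi(U)$; this is exactly the assertion that $U$ is an invariant attracting domain of $g$ iff $\phi(U)$ is one of $f$ (forward invariance, if one wants it verified directly, transfers via $f(\phi(U))=\phi(g(U))$). For complete invariance, note that $f^{-1}=\phi\circ g^{-1}\circ\phi^{-1}$ as preimage operators, so $f^{-1}(\phi(U))=\phi(g^{-1}(U))$, and therefore $g^{-1}(U)\subseteq U\iff f^{-1}(\phi(U))\subseteq\phi(U)$.

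I do not expect a conceptual obstacle: the whole lemma is a formal consequence of $\phi$ being a conformal automorphism of the sphere. The only point needing genuine care is the multiplier equality when $\infty$ is one of the fixed points involved, or when $\phi(\infty)$ or $\phi^{-1}(\infty)$ coincides with a relevant point; there the plain differentiation of $f\circ\phi=\phi\circ g$ must be redone in the chart $z\mapsto 1/z$ used to define the multiplier at $\infty$. This is bookkeeping rather than difficulty, and once the derivatives are interpreted in the correct local coordinates the cancellation argument is unchanged.
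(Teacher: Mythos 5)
The paper does not prove this lemma; it is cited directly from Beardon's book \cite{Be} with no argument given. Your proof is the standard one and is correct: transporting normality through the conformal homeomorphism $\phi$ to get $\mathcal{F}(f)=\phi(\mathcal{F}(g))$, the chain rule on $f\circ\phi=\phi\circ g$ for the multiplier equality (with the correct caveat that at $\infty$ one must work in the chart $z\mapsto 1/z$), the equivalence ``simple $\iff$ multiplier $\neq 1$'' for the simplicity claim, and $f^{-1}\circ\phi=\phi\circ g^{-1}$ for complete invariance -- nothing is missing.
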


For a fixed point $z_0$ of a non-constant rational function $f$, the residue fixed point index, in short the residue index of $f$ at $z_0$ is defined by $$\iota(f,z_0)=\frac{1}{2\pi i}\oint \dfrac{dz}{z-f(z)}$$
where the integration is taken in a small loop around $z_0$ (in the sense that it does not surround any other fixed point of $R$) in the positive direction. If $z_0$ is a simple fixed point with multiplier $\lambda$ then $\iota (R,z_0)=\frac{1}{1-\lambda}$. The \textit{Rational Fixed Point Theorem} is a useful relation between   residue fixed point indices of all the fixed points of a rational function (See Theorem 12.4, \cite{Mi}).
\begin{theorem}[Rational Fixed Point Theorem]\label{hff} If $f$ is not the identity map and is a non-constant rational function then the sum of residue fixed point indices of all its fixed points in $\widehat{\mathbb{C}}$ is equal to $1$.
\end{theorem}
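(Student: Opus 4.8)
The plan is to deduce the statement from the fact that the residues of a meromorphic differential on $\widehat{\mathbb{C}}$ sum to $0$, applied to $\omega=\dfrac{dz}{z-f(z)}$. Two preliminary reductions make this clean. First, I would record that the residue fixed point index is a conformal invariant: if $\psi$ is a M\"{o}bius map and $g=\psi\circ f\circ\psi^{-1}$, then $\iota(g,\psi(z_0))=\iota(f,z_0)$ for every fixed point $z_0$ of $f$ --- this is a change of variables in the defining contour integral, with the cases $z_0=\infty$ or $\psi(z_0)=\infty$ handled by working in the coordinate $w=1/z$ (or by a limiting argument). Since $f$ is not the identity it has only finitely many fixed points, so I may choose $\psi$ taking some non-fixed point to $\infty$; replacing $f$ by $g=\psi\circ f\circ\psi^{-1}$ then preserves the multiset of fixed point indices and ensures that $\infty$ is \emph{not} a fixed point of $g$, so that every fixed point of $g$ lies in $\mathbb{C}$. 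Hence it suffices to prove $\sum_{z_0\in\mathbb{C}}\iota(g,z_0)=1$ under the extra assumption that $\infty$ is not fixed.

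Under that assumption, $\omega=\dfrac{dz}{z-g(z)}$ is a nonzero meromorphic differential on $\widehat{\mathbb{C}}$, and by the definition of residue $\iota(g,z_0)=\mathrm{Res}_{z_0}\,\omega$ at every finite fixed point $z_0$ of $g$. The poles of $\omega$ in $\mathbb{C}$ are exactly the zeros of $z-g(z)$, i.e.\ the fixed points of $g$: at a finite pole of $g$ the factor $1/(z-g(z))$ has a zero, so $\omega$ is regular there. At a simple fixed point $z_0$ with multiplier $\lambda\ne 1$ one has $z-g(z)=(1-\lambda)(z-z_0)+O((z-z_0)^2)$, whence $\mathrm{Res}_{z_0}\,\omega=\frac{1}{1-\lambda}$, consistent with the formula recalled before the statement; at a multiple fixed point the identification $\mathrm{Res}_{z_0}\,\omega=\iota(g,z_0)$ is immediate from the definition. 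At $\infty$, the substitution $w=1/z$ gives $\omega=\dfrac{-w^{-1}\,dw}{1-w\,g(1/w)}$, and since $\infty$ is not fixed the map $w\mapsto g(1/w)$ is holomorphic at $0$, so $\omega=-\dfrac{dw}{w}+(\text{holomorphic})$ near $w=0$ and $\mathrm{Res}_\infty\,\omega=-1$.

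Summing all residues of $\omega$ over $\widehat{\mathbb{C}}$ to $0$ now gives $\sum_{z_0\in\mathbb{C}}\iota(g,z_0)-1=0$, i.e.\ $\sum_{z_0\in\mathbb{C}}\iota(g,z_0)=1$; since $\infty$ is not a fixed point, this is the sum over all fixed points of $g$ in $\widehat{\mathbb{C}}$. Transporting back through $\psi$ using the conformal invariance of $\iota$ gives $\sum_{z_0}\iota(f,z_0)=1$, as desired.

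The local Laurent expansions at a fixed point and at $\infty$ are routine. The one point I would treat carefully is the conformal-invariance lemma, because the differential $dz/(z-f(z))$ is attached to the affine coordinate $z$ and is not intrinsic: either one checks the change of variables in the contour integral directly (including the behaviour at $\infty$), or, equivalently, one keeps $\infty$ and carries out the extra Laurent computation showing $\mathrm{Res}_\infty\bigl(dz/(z-f(z))\bigr)=\iota(f,\infty)-1$ when $\infty$ is a fixed point. This bookkeeping at $\infty$ is the main obstacle; the rest is immediate from the residue theorem on $\widehat{\mathbb{C}}$.
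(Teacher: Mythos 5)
The paper does not prove this theorem; it cites it as Theorem 12.4 of Milnor \cite{Mi}. Your argument is precisely the standard proof given there: reduce by conformal invariance of the index to the case where $\infty$ is not fixed, identify $\iota(g,z_0)$ with the residue of the form $dz/(z-g(z))$ at each finite fixed point, check that the form is regular at the poles of $g$ and has residue $-1$ at $\infty$, and apply the residue theorem on $\widehat{\mathbb{C}}$. The computations you give (including $\mathrm{Res}_\infty = -1$ and the alternative identity $\mathrm{Res}_\infty\bigl(dz/(z-f(z))\bigr)=\iota(f,\infty)-1$ when $\infty$ is fixed) are correct, and you rightly flag the only nontrivial ingredient, the invariance of the index under holomorphic change of coordinates, which is a separate lemma (Milnor's Lemma 12.2/12.3) whose proof requires more than a formal change of variables since $dz/(z-f(z))$ is not a coordinate-free differential.
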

Since $\infty$ is a superattracting fixed point of every polynomial $P$ of degree at least two, the residue index  of $P$ at $\infty$ is $1$ and conseqently,  $\sum_{P(z)=z}\iota(P,z)=0$, where the sum is taken over all the finite fixed points of $P$. Two useful remarks follow.
\begin{Remark}
	The residue index of  $N_R$ at a fixed point  with multiplier $\frac{p}{q}$ is $q$ or $-q$; it is $q$ if and only if $p< q$. In other words, the residue index of a fixed point of a Newton map is positive if and only if the fixed point is attracting.  \label{index}
\end{Remark}
\begin{Remark}
The residue index  of a rational function at each fixed point is $q$ or $-q$ whenever its multiplier is $\frac{p}{q}$ for some  $p \in \mathbb{N} \bigcup \{0\}, q \in \mathbb{N}$ with $|p-q|=1$. Conversely, if the residue index of a simple fixed point is a nonzero integer $q$ then its multiplier is of the form  $p \in \mathbb{N} \bigcup \{0\}, q \in \mathbb{N}$ with $|p-q|=1$.  
\label{multiplier-index}
\end{Remark} 
As an important application of the Rational Fixed Point Theorem, Buff and Heneriksen proved the following.
\begin{lemma}\cite{2}\label{Buff}
Let all the fixed points of two rational functions $f_1$ and $f_2$ be simple. If $f_1$ and $f_2$ have the same set of fixed points $ \{z_1, z_2, ...,z_n\}$ such that  $f_1'(z_i)=f_2'(z_i)$ for all $i$ then $f_1(z)=f_2(z)$ for all $z \in \widehat{\mathbb{C}}$.
\end{lemma}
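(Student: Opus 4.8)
The plan is to package the entire fixed-point data of a rational map into a single rational function and then show that this function is forced to be the same for $f_1$ and $f_2$. For $k=1,2$ put $\phi_k(z)=\frac{1}{z-f_k(z)}$, which is rational and which determines $f_k$ through $f_k(z)=z-\frac{1}{\phi_k(z)}$; hence it suffices to prove $\phi_1\equiv\phi_2$.

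First I would locate the poles of $\phi_k$. A finite point is a pole of $\phi_k$ exactly where $z-f_k(z)=0$, that is, at the finite fixed points of $f_k$; at a finite pole of $f_k$ one has instead $f_k(z)=\infty$, so $\phi_k$ vanishes there rather than blows up (this is exactly why the extraneous fixed points of a Newton map will cause no trouble when this lemma is applied later). Since every fixed point of $f_k$ is simple, $z-f_k(z)$ has a simple zero at each $z_i$, so $\phi_k$ has a simple pole at $z_i$, and by the residue-index formula recalled just before Theorem~\ref{hff} we get $\operatorname{Res}_{z_i}\phi_k=\iota(f_k,z_i)=\frac{1}{1-f_k'(z_i)}$. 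Because $f_1$ and $f_2$ have the same fixed-point set and $f_1'(z_i)=f_2'(z_i)$ for every $i$, the rational functions $\phi_1$ and $\phi_2$ have the same finite poles, all simple, and the same residue at each of them.

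Next I would handle the point $\infty$, which is the only place the argument needs care. The claim is that $\phi_k(\infty)=0$ in every case. If $\infty$ is not a fixed point of $f_k$, then $z-f_k(z)\to\infty$ as $z\to\infty$ by the degree criterion of Proposition~\ref{infty}, so $\phi_k(\infty)=0$. If $\infty$ is a fixed point of $f_k$, then it is simple, so its multiplier differs from $1$; comparing the leading behaviour of $z$ and of $f_k(z)$ near $\infty$ via Proposition~\ref{infty} again gives $z-f_k(z)\to\infty$, so $\phi_k(\infty)=0$. (Alternatively, conjugating $f_1$ and $f_2$ simultaneously by a M\"obius map that sends some non-fixed point to $\infty$ preserves every hypothesis, by Lemma~\ref{conjugacy}, and reduces everything to the first, easier case.)

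Finally, $\phi_1-\phi_2$ is then a rational function with no poles on $\widehat{\mathbb{C}}$: the potential poles at the $z_i$ cancel because the residues agree, and $\infty$ is not a pole since both $\phi_k$ vanish there. Hence $\phi_1-\phi_2$ is constant, equal to its value $0$ at $\infty$, so $\phi_1\equiv\phi_2$; equivalently, each $\phi_k$ is forced to be the single partial-fraction expression $\sum_{i:\,z_i\ne\infty}\frac{1}{\bigl(1-f_1'(z_i)\bigr)(z-z_i)}$ built from the common data. Therefore $z-f_1(z)=z-f_2(z)$, i.e.\ $f_1\equiv f_2$. The only genuine difficulty is the bookkeeping at $\infty$, in particular allowing $\infty$ itself to be a (possibly superattracting) fixed point; everything else is just the residue-index formula and Liouville's theorem on $\widehat{\mathbb{C}}$.
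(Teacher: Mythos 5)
The paper does not prove this lemma --- it only cites Buff--Henriksen --- so there is no in-text argument to compare against; your proof stands on its own and it is correct and complete. It is also the standard argument (and essentially the one in the cited reference \cite{2}): expand $\frac{1}{z-f(z)}$ in partial fractions, note that simplicity of the fixed points makes all poles simple with residues equal to the residue indices $\frac{1}{1-f'(z_i)}$, and observe that the function vanishes at $\infty$ whether or not $\infty$ is fixed, since a simple fixed point at $\infty$ has multiplier $\neq 1$. The bookkeeping at $\infty$ and at the poles of $f_k$ is exactly the point that needs care, and you handle both correctly.
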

This lemma leads to a useful result for quadratic rational functions.
\begin{lemma}
Let all the fixed points of two quadratic rational functions $f_1$ and $f_2$  be simple. Then  $f_1$ is conjugate to $f_2$ if and only if the set of all the multipliers of all fixed points of $f_1$ is the same as that of $f_2$.
\label{quadratic}
\end{lemma}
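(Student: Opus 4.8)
The plan is to prove both implications using the lemmas already established in the excerpt. The forward implication is immediate: if $f_1 = \phi \circ f_2 \circ \phi^{-1}$ for some M\"obius map $\phi$, then by Lemma~\ref{conjugacy}(1) the map $z \mapsto \phi(z)$ is a bijection from the set of fixed points of $f_2$ onto that of $f_1$ which preserves multipliers, so the two sets of multipliers coincide.

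For the converse, I would first record the standing setup. A quadratic rational map has exactly three fixed points counting multiplicity, so $f_1$ and $f_2$ each have three \emph{distinct} fixed points (since all are simple), say $z_1,z_2,z_3$ and $w_1,w_2,w_3$, with multipliers $\lambda_i=f_1'(z_i)$ and $\mu_j=f_2'(w_j)$. As the fixed points are simple, no multiplier equals $1$, so the residue index of $f_k$ at a fixed point with multiplier $\lambda$ is $\frac{1}{1-\lambda}$, and the Rational Fixed Point Theorem (Theorem~\ref{hff}) gives $\sum_i \frac{1}{1-\lambda_i} = 1 = \sum_j \frac{1}{1-\mu_j}$. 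I would use this to upgrade the hypothesis ``same set of multipliers'' to ``same multiset of multipliers'': if a value occurred with different multiplicities on the two sides, subtracting the two relations above would force two of the listed multipliers to be equal, contradicting the assumed difference. Hence, after relabelling, we may assume $\lambda_i = \mu_i$ for $i=1,2,3$.

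Next I would take the unique M\"obius map $\phi$ with $\phi(w_i)=z_i$ for $i=1,2,3$ and set $g = \phi \circ f_2 \circ \phi^{-1}$. Then $g$ is a quadratic rational map; by Lemma~\ref{conjugacy} its fixed points are exactly $z_1,z_2,z_3$, they are all simple, and $g'(z_i) = f_2'(w_i) = \mu_i = \lambda_i = f_1'(z_i)$ for each $i$. Thus $f_1$ and $g$ are rational functions all of whose fixed points are simple, with the same fixed point set and the same multiplier at each of those points, so Lemma~\ref{Buff} yields $f_1 = g = \phi \circ f_2 \circ \phi^{-1}$; that is, $f_1$ is conjugate to $f_2$.

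The only genuinely delicate point is the multiset argument in the second paragraph: one must rule out, via the fixed point relation, the possibility that the common \emph{set} of multipliers arises from two different \emph{multisets} on the two sides. Everything else is a routine application of Lemma~\ref{conjugacy}, Lemma~\ref{Buff}, and the fact that three points determine a M\"obius map. (If ``set'' is read as ``multiset'' from the outset, even this step is unnecessary.)
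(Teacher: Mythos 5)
Your proof is correct and follows essentially the same route as the paper's: match the multipliers, conjugate by the M\"obius map sending one fixed-point triple onto the other, and invoke Lemma~\ref{Buff}. The one place you go beyond the paper is the set-versus-multiset step via Theorem~\ref{hff}: the paper simply writes ``possibly after rearranging'' and implicitly treats the multiplier sets as multisets, whereas you explicitly (and correctly) rule out the scenario where the two multisets differ while the underlying sets agree --- a genuine gap-filling clarification rather than a different approach.
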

\begin{proof} 
Since all the fixed points of $f_1$ and $f_2$ are simple, there are three distinct fixed points for each $f_k, k=1,2$.
Let $z_i$ and $w_i$ be the fixed points of $f_1$ and $f_2$ respectively  for $i=1,2,3$. 
\par If the set of all the multipliers of all  fixed points of $f_1$ is the same as that of $f_2$ then possibly after rearranging $z_i$'s, we get that  $f_1 '(z_i)=f_2 '(w_i)$ for $i=1,2,3$. Considering the M\"{o}bius map $\phi$ with $\phi(z_i)=w_i, i=1,2,3$, it is observed that $\phi^{-1} \circ f_2 \circ \phi$ fixes $z_i$ with multiplier $f_2 '(w_i)$ for each $i$. Now it follows from Lemma~\ref{Buff} that $f_1 = \phi^{-1} \circ f_2 \circ \phi $. In other words, $f_1$ is conjugate to $f_2$. Conversely if $f_1$ is conjugate to $f_2$ then for some M\"{o}bius map $\psi$, $f_1=\psi^{-1} \circ f_2 \circ \psi$. Further, $f_1$ fixes $z_i$ if and only if $f_2 $ fixes $\psi(z_i)$, which is an element of  $\{w_i: i=1,2,3\}$. Without loss of generality assume that  $\psi(z_i)=w_i, i=1,2,3$. Now $f_1 '(z_i)=f_2 '(w_i)$ for $i=1,2,3$ by Lemma~\ref{conjugacy} and the proof is complete.
\end{proof}
Lemma~\ref{Buff} asserts that a rational function with only simple fixed points is uniquely determined by its fixed points and their multipliers. This fact leads to a characterization of Newton maps. Such a characterization is given by Crane (See Lemma $4.1$,  \cite{EC}). We present this with a minor change in statement and with a quite straightforward proof.
\begin{theorem}[Characterization of Newton maps]
A rational map $N$ of degree at least two is the Newton map of a rational function $R$ if and only if all the fixed points of $N$ are simple and all but one of their multipliers are of the form $ \frac{p}{q} $ for some $p \in \mathbb{N} \bigcup \{0\}, q \in \mathbb{N}$ with $|p-q|=1$. Moreover, each finite fixed point of $N$ with multiplier $\frac{p}{q}$ is either a root (if $p < q$) or a pole (if $p> q$) of $R$ with multiplicity $q$.
\end{theorem}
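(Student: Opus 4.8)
The forward implication I would handle as bookkeeping. If $N=N_R$, then Proposition~\ref{NFP} (together with the degree formula~(\ref{2.4})) already names \emph{every} fixed point of $N_R$ --- the finite roots $\alpha_i$ of $R$, the finite poles $\beta_j$, and $\infty$ precisely when $d\neq e+1$ --- with respective multipliers $\frac{d_i-1}{d_i}$, $\frac{e_j+1}{e_j}$, $\frac{d-e}{d-e-1}$. Remark~\ref{multiplier} says each of these has the shape $p/q$ with $p\in\mathbb N\cup\{0\}$, $q\in\mathbb N$, $|p-q|=1$, and that all fixed points are simple; so in fact \emph{all} (not merely all but one) of the multipliers are of the stated form, and the forward direction is done.

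For the converse, I would first upgrade the hypothesis to ``all''. Set $\delta=\deg N\ge2$; simplicity gives exactly $\delta+1$ fixed points $z_1,\dots,z_{\delta+1}$ with multipliers $\lambda_i\neq1$. By hypothesis $\lambda_1,\dots,\lambda_\delta$ have the form $p_i/q_i$ with $|p_i-q_i|=1$, so their residue indices $\iota(N,z_i)=\frac{1}{1-\lambda_i}=\frac{q_i}{q_i-p_i}=\pm q_i$ are nonzero integers (the sign is $+$ exactly when $p_i<q_i$). By the Rational Fixed Point Theorem (Theorem~\ref{hff}), $\iota(N,z_{\delta+1})=1-\sum_{i\le\delta}\iota(N,z_i)$ is then an integer, and it is nonzero because $z_{\delta+1}$ is simple; hence by Remark~\ref{multiplier-index} the multiplier $\lambda_{\delta+1}$ is \emph{also} of the form $p/q$ with $|p-q|=1$. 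So I may write $\lambda_i=p_i/q_i$ for all $i$, call $z_i$ attracting if $p_i<q_i$ and repelling if $p_i>q_i$, and read Theorem~\ref{hff} as
\[
\sum_{z_i\ \text{attracting}} q_i \;-\; \sum_{z_i\ \text{repelling}} q_i \;=\;1 .
\]

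Next I would simply exhibit the candidate $R$. If $\infty$ is not a fixed point of $N$, all $z_i$ are finite, and I would take $R(z)=\dfrac{\prod_{z_i\ \text{attracting}}(z-z_i)^{q_i}}{\prod_{z_i\ \text{repelling}}(z-z_i)^{q_i}}$, which is in lowest terms since the $z_i$ are distinct; if $\infty$ is a fixed point, say $z_{\delta+1}=\infty$, I would use the same expression but over the finite $z_i$ only. In either case Proposition~\ref{NFP} shows the finite fixed points of $N_R$ are exactly the finite $z_i$: an attracting one is a root of multiplicity $q_i$, with multiplier $\frac{q_i-1}{q_i}=\frac{p_i}{q_i}$ (because $|p_i-q_i|=1$ and $p_i<q_i$ force $p_i=q_i-1$), and symmetrically a repelling one is a pole of multiplicity $q_i$ with multiplier $\frac{q_i+1}{q_i}=\frac{p_i}{q_i}$.

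The one genuinely delicate point --- and the step I expect to be the main obstacle --- is the behaviour at $\infty$: whether $\infty$ is a fixed point of $N_R$ is decided by $d\neq e+1$, and its multiplier by $\frac{d-e}{d-e-1}$, where $d-e$ equals the difference of the two sums above restricted to the finite $z_i$. The displayed identity resolves this precisely: when $\infty$ is not a fixed point of $N$ we get $d-e=1$, so $\infty$ is not fixed by $N_R$ and $\deg N_R=m+n-1=\delta$; when $\infty$ is a fixed point we get $d-e=1\mp q$ according as $\infty$ is attracting or repelling, which is exactly the value that makes $\frac{d-e}{d-e-1}=\lambda_{\delta+1}$, with $d\neq e+1$ keeping $\infty$ a fixed point and $\deg N_R=m+n=\delta$ (a one-line check with the same identity, using $\delta\ge2$, rules out the degenerate cases where $R$ would be constant or linear). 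Therefore $N$ and $N_R$ are rational maps with the same simple fixed points and equal multipliers there, so Lemma~\ref{Buff} yields $N=N_R$. Finally, the ``moreover'' clause costs nothing more: for any $R$ with $N_R=N$, Proposition~\ref{NFP} forces a finite fixed point to be a root of $R$ when attracting and a pole when repelling, and since $|p-q|=1$ makes $p/q$ already in lowest terms, its multiplier determines the multiplicity to be $q$.
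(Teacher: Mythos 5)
Your proposal is correct and follows essentially the same route as the paper: upgrade ``all but one'' to ``all'' via the Rational Fixed Point Theorem and Remark~\ref{multiplier-index}, build the candidate $R$ from the finite fixed points with exponents read off from the multipliers, match the data at $\infty$, and conclude with Lemma~\ref{Buff}. Your explicit verification (via the index identity) that $\infty$ is \emph{not} fixed by $N_R$ when $N(\infty)\neq\infty$ is a detail the paper leaves implicit before invoking Lemma~\ref{Buff}, so your write-up is if anything slightly more careful at that point.
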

\begin{proof}
All the fixed points of the Newton map of a rational function  are simple and their mutipliers are of the form $\frac{p}{q}$ for some $p \in \mathbb{N} \bigcup \{0\}, q \in \mathbb{N}$ with $|p-q|=1$. This follows from Proposition~\ref{NFP}.

\par Conversely, let all the fixed points of a rational map $N$ be simple and  all but one of their multipliers are of the form  
$\frac{p}{q} $ for some $p \in \mathbb{N} \bigcup \{0\}, q \in \mathbb{N}$ with $|p-q|=1$. By the Rational Fixed Point Theorem, the  residue index of the remaining fixed point is a nonzero integer and its multiplier is of the same form by Remark~\ref{multiplier-index}.
Let $\{\alpha_i,~ \beta_j: ~ i=1,2,...,m \text{ and } j=1,2,...,n\}$ be the set of all finite fixed points of $N$ such that $N'(\alpha_i)=\frac{d_i-1}{d_i}, ~ d_i\in \mathbb{N}$ for $i=1,2,...,m$ and $N'(\beta_j)=\frac{e_j+1}{e_j},~ e_j\in \mathbb{N}$ for $j=1,2,...,n$. Then considering the rational function $R(z)=\frac{\prod_{i=1}^{m}(z-\alpha_i)^{d_i}}{\prod_{j=1}^{n}(z-\beta_j)^{e_j}}$ it is observed that its Newton map $N_R$ has finite fixed points $\alpha_i$ and $\beta_j$ with multipliers $ \frac{d_i-1}{d_i} $ and $\frac{e_j+1}{e_j}$ respectively for $i=1,2,...,m$ and $j=1,2,...,n.$ If $N(\infty)\neq \infty$ then  $N=N_R$ by Lemma \ref{Buff}.	If $N(\infty) = \infty$ then $\sum_{i=1}^{m}d_i \neq 1+ \sum_{j=1}^{n}e_j$  and the multiplier of $\infty$ is $\frac{\sum_{i=1}^{m}d_i - \sum_{j=1}^{n}e_j}{\sum_{i=1}^{m}d_i - \sum_{j=1}^{n}e_j-1}$, by Theorem~\ref{hff}. Note that in this case, $N_R (\infty) =\infty$ and its multiplier is $\frac{\sum_{i=1}^{m}d_i - \sum_{j=1}^{n}e_j}{\sum_{i=1}^{m}d_i - \sum_{j=1}^{n}e_j-1}$, by Proposition~\ref{NFP}. Therefore, by Lemma \ref{Buff}, $N=N_R$.
\par The rest is obvious.	
\end{proof}

\subsection{More on Fatou components}
A point $z_0$ is called a $p-$periodic point of a rational function $f$ if $p$ is the least natural number such that $f^p (z_0)=z_0$. This is attracting, repelling or indifferent if it is so as a fixed point of $f^p$. A Fatou component $U$ is called $p-$periodic if $f^p(U)=U$. In this case, we say $\{U, U_1, U_2,..., U_{p-1}\}$ is  a $p-$cycle of Fatou components. It is known that every Fatou component of each rational function is ultimately periodic. A periodic Fatou component is called an attracting(superattracting) domain or a parabolic domain if it contains an attracting(superattracting) periodic point or its boundary contains a rationally indifferent periodic point respectively. The only other possibility for a $p-$periodic Fatou component $U$  is that on $U$, $f^p$ is conformally conjugate to an irrational rotation of the unit disk or of the annulus $\{z: 1<|z|<r\}$ for some $r$. In the first case, $U$ is called a Siegel disk whereas it is called a Herman ring in the second situation.  A Fatou component $U$ is called  invariant if it is $1-$periodic. 
\par The attracting basin $A(z_0)$ of an attracting fixed point $z_0$ is the set $\{z \in \widehat{\mathbb{C}}: f^n (z) \to z_0~\mbox{as}~ n \to \infty\}$. Its connected component containing $z_0$ is the invariant attracting domain and is usually referred as the immediate basin of attraction of $z_0$. The attracting basin is not necessarily completely invariant.  All these facts and a good introduction to the subject can be found in the book~\cite{Be}.
\par
A point $c$ is called a critical point of a rational function $f$ if the local degree of $f$ at $c$ is at least two. This is equivalent to $f'(c)=0$ whenever $c \in \mathbb{C}$. Critical points are known to control the dynamics. The precise statement follows.
\begin{lemma}[\cite{Be}]
Let $f$ be a rational function with degree at least two with a $p-$cycle of Fatou components  $\mathcal{C} = \{U_0,U_1, U_2,...,U_{p-1} \}$.
\begin{enumerate}
\item If $\mathcal{C}$ is a cycle of attracting domains  or parabolic domains then  $\bigcup_{i=0}^{p-1} U_i$ contains at least one critical point of $f$.
\item If $\mathcal{C}$ is a cycle of Siegel discs or Herman rings then  the union of boundaries $\bigcup_{i=0}^{p-1} \partial U_i$ is contained in the closure of $\bigcup_{n=0}^{\infty} f^n (C)$ where $C$ is the set of all critical points of $f$.
\end{enumerate}
\label{basic}
\end{lemma}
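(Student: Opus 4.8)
The plan is to prove the two parts separately, in each case arguing by contradiction and using the elementary fact that every Fatou component of a rational map of degree at least two is a hyperbolic subdomain of $\widehat{\mathbb{C}}$, since its complement contains the infinite set $\mathcal{J}(f)$.

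For part (1) I would first reduce to an invariant situation. If $\mathcal{C}$ is a cycle of superattracting domains, pick a superattracting periodic point $\hat{z}$ in it, of period $q$; then $(f^{q})'(\hat{z})=\prod_{j=0}^{q-1}f'(f^{j}(\hat{z}))=0$, so some $f^{j_{0}}(\hat{z})$ is a critical point of $f$ lying on the periodic cycle of $\hat{z}$, hence in $\bigcup_{i}U_{i}$, and we are done. Otherwise choose an iterate $g=f^{N}$ with $N$ a common multiple of $p$ and of the period of the relevant periodic point (and, in the parabolic case, also of the order of its multiplier), so that $g$ fixes each $U_{i}$ setwise and, in the attracting case, fixes the attracting periodic point with multiplier $\lambda$, $0<|\lambda|<1$, while in the parabolic case it fixes the rationally indifferent boundary point with multiplier $1$. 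Since a critical point of $g$ inside some $U_{i}$ yields, by the chain rule, a critical point of $f$ inside the $f$-orbit of $U_{i}$, i.e.\ again inside $\bigcup_{i}U_{i}$, it suffices to exhibit a critical point of $g$ in one invariant component $U_{0}$. Suppose there is none. In the attracting case take the Koenigs linearizing coordinate $\varphi$ near the fixed point ($\varphi\circ g=\lambda\varphi$); in the parabolic case take a Fatou coordinate $\Phi$ on an attracting petal $\mathcal{P}\subset U_{0}$ ($\Phi\circ g=\Phi+1$). Because $U_{0}$ is forward invariant under $g$ and contains no critical point of $g$, every branch of $g^{-1}$ used to propagate the coordinate over $U_{0}$ is biholomorphic, so $\varphi$ (respectively $\Phi$) extends to a conformal isomorphism of $U_{0}$ onto $\mathbb{C}$; this contradicts the hyperbolicity of $U_{0}$, and part (1) follows.

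For part (2) set $P=\overline{\bigcup_{n\ge 0}f^{n}(C)}$. If $P$ has at most two points then $f$ is conjugate to a power map, a Chebyshev map, or a Latt\`es map, none of which possesses a Siegel disc or a Herman ring, so the statement is vacuous; assume therefore $P$ has at least three points, so $\widehat{\mathbb{C}}\setminus P$ is hyperbolic. Suppose the conclusion fails: there is $\zeta\in\partial U_{i_{0}}$ with $\zeta\notin P$, and we pick a round disc $D\ni\zeta$ with $\overline{D}\cap P=\emptyset$. Since every critical value of every iterate $f^{pn}$ lies in $P$, the map $f^{pn}$ restricts to an unbranched covering over the simply connected disc $D$, hence admits $\deg(f^{pn})$ univalent holomorphic sections $D\to\widehat{\mathbb{C}}$; let $h_{n}$ be the section agreeing on $D\cap U_{i_{0}}$ with the conformal automorphism $(f^{p}|_{U_{i_{0}}})^{-n}$ of $U_{i_{0}}$ (recall that $f^{p}$ is conjugate to a rotation on $U_{i_{0}}$). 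From $f^{pn}(h_{n}(D))=D$, $D\cap P=\emptyset$ and $f(P)\subseteq P$ we get $h_{n}(D)\cap P=\emptyset$, so $\{h_{n}\}$ is a normal family of univalent maps into the hyperbolic domain $\widehat{\mathbb{C}}\setminus P$. On $D\cap U_{i_{0}}$ each $h_{n}$ is, in the linearizing coordinate, the rotation by $-n\theta$; passing to a subsequence along which $n_{k}\theta$ converges, $h_{n_{k}}\to h$ locally uniformly on $D$, with $h|_{D\cap U_{i_{0}}}$ a non-trivial (inverse) rotation, so $h$ is non-constant, hence univalent by Hurwitz, and $h(\zeta)\in\partial U_{i_{0}}\subset\mathcal{J}(f)$ (since $h_{n}(\zeta)\in\partial U_{i_{0}}$ for each $n$).

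To finish, fix a small ball $B\ni h(\zeta)$ with $\overline{B}\subset h(D)$. As $h_{n_{k}}\to h$ and all the maps are univalent, $B\subset h_{n_{k}}(D)$ for all large $k$, whence $f^{pn_{k}}(B)\subset D$; since $D$ omits at least three points, $\{f^{pn_{k}}|_{B}\}$ is a normal family. On the other hand, repelling periodic points of $f$ are dense in $\mathcal{J}(f)$, so $B$ contains one of them, say $\xi$, of period $r$ and multiplier $\mu$ with $|\mu|>1$; then $|(f^{pn_{k}})'(\xi)|\to\infty$ while $f^{pn_{k}}(\xi)$ remains in the finite forward orbit of $\xi$, so any subsequential limit of $\{f^{pn_{k}}|_{B}\}$ would be holomorphic near $\xi$ with finite derivative --- a contradiction. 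Hence $\partial U_{i_{0}}\subset P$, which proves (2). The main obstacle is Part (2): one must choose inverse branches of the iterates adapted to the rotation domain and use the rotation structure to produce a \emph{non-constant} (hence univalent) subsequential limit, after which the contradiction is a routine normal-families argument; Part (1), by contrast, reduces after a standard normalization to the classical extension of the linearizing coordinate of an attracting or parabolic fixed point.
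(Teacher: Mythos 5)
The paper does not prove this lemma; it is stated with a citation to Beardon's book \cite{Be}, so there is no internal proof to compare against. Your argument is essentially the standard textbook proof (cf.\ Beardon, Theorems 7.5.1, 9.3.1--9.3.3; Milnor, Theorems 8.6, 10.15, 11.17), and it is sound.

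Two small points worth flagging. In part (1) it is $\varphi^{-1}$ (resp.\ $\Phi^{-1}$), not $\varphi$, that propagates along inverse branches of $g$; the absence of critical points lets the local inverse $\psi$ continue over arbitrarily large discs $D_r\subset\mathbb{C}$ via $\psi(\lambda^{-1}w)=g^{-1}(\psi(w))$ (using that $D_r$ is simply connected so the lift along the unramified covering $g|_{U_0}$ has no monodromy obstruction), producing a \emph{non-constant} holomorphic map $\mathbb{C}\to U_0$. This map need not be surjective onto $U_0$, so ``conformal isomorphism of $U_0$ onto $\mathbb{C}$'' overstates it; but a non-constant holomorphic map of $\mathbb{C}$ into a hyperbolic domain is impossible by Liouville after lifting to the universal cover $\mathbb{D}$, and that is the contradiction you need. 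In part (2), if $P=\overline{\bigcup_n f^n(C)}$ has at most two points then $f$ is conjugate to $z\mapsto z^{\pm d}$ (not to a Chebyshev or Latt\`es map, which have $3$ or $4$ postcritical points), and these indeed have no rotation domains, so your dismissal of the small-$P$ case stands. The rest of part (2) --- choosing the inverse branches $h_n$ that restrict to the rotation $(f^p|_{U_{i_0}})^{-n}$ on $D\cap U_{i_0}$, extracting a non-constant univalent limit $h$, and then obtaining a contradiction from $f^{pn_k}(B)\subset D$ against the blow-up of derivatives at a repelling periodic point in $B$ --- is correct. (An alternative finish, once $f^{pn_k}(B)\subset D$ for all large $k$: since $B$ meets $\mathcal{J}(f)$, the blowing-up property of the Julia set says $f^{pn_k}(B)$ eventually covers any compact set disjoint from the exceptional set, which a fixed small disc $D$ cannot contain.)
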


\section{Quadratic Newton maps}
%
%
%
%
%

The following theorem proves that there are exactly two quadratic Newton maps up to conjugacy.
 \begin{theorem}
 	Let $N_1$ and $N_2$ be the Newton maps of $R_1(z)=z^{d_1}(z-1)^{d_2}$ and $R_2(z)=\frac{1}{z^{e_1}(z-1)^{e_2}}$ respectively.
 	 Then every quadratic Newton map is conjugate either to $N_{1} $ or to $N_{2}$ for some $d_1, d_2, e_1, e_2 \in \mathbb{N}$. 
 	 \label{2quadratic-Newtonmaps}
 \end{theorem}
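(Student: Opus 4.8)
The statement is that every quadratic Newton map is conjugate to one of the two explicit families $N_1, N_2$. The natural strategy is to combine the degree formula \eqref{2.4}, the fixed-point analysis of Proposition~\ref{NFP}, and the rigidity of quadratic rational functions captured in Lemma~\ref{quadratic}. First I would use Remark~\ref{multiplier} together with the Characterization of Newton maps to observe that a quadratic Newton map $N_R$ has exactly three fixed points, all simple, with multipliers of the form $\frac{p}{q}$, $|p-q|=1$. So the problem reduces to enumerating the admissible triples of multipliers.

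**Enumerating the multiplier triples.** By Proposition~\ref{NFP}, each finite fixed point of $N_R$ is either a root of $R$ (multiplier $\frac{d_i-1}{d_i}$, attracting) or a pole of $R$ (multiplier $\frac{e_j+1}{e_j}$, repelling), and $\infty$, when it is fixed, has multiplier $\frac{d-e}{d-e-1}$. Next I would apply the Rational Fixed Point Theorem (Theorem~\ref{hff}): the residue indices sum to $1$. Using Remark~\ref{index}, each attracting fixed point contributes a positive integer $q$ and each repelling one a negative integer $-q$. Since $\deg N_R=2$, the degree formula \eqref{2.4} gives two cases. If $d=e+1$, then $\infty$ is not fixed and $m+n-1=2$, so $m+n=3$: either $(m,n)=(2,1)$ (two roots, one pole) or $(m,n)=(3,0)$ — but the latter would make $N_R$ the Newton map of a polynomial, forcing all fixed points attracting, contradicting the index sum being $1$ (a sum of positive integers can't be $1$ unless there is exactly one, i.e. degree $1$). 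So $(m,n)=(2,1)$: this is exactly $R_1(z)=z^{d_1}(z-1)^{d_2}$ (after a Möbius map sending the two roots to $0,1$), and one checks the index identity $d_1+d_2-e_1=1$ is automatically the condition $d=e+1$ with the pole of multiplicity $e_1$ — wait, more carefully, with one pole of multiplicity $e$ the identity reads $d_1+d_2-e=1$, i.e. $d=e+1$, consistent. If instead $d\neq e+1$, then $\infty$ is fixed and $m+n=2$; the possibilities $(m,n)\in\{(2,0),(1,1),(0,2)\}$ must be checked against the index sum and against $d\neq e+1$, and the surviving configuration with $\infty$ attracting and one finite root and one finite pole is precisely $R_2(z)=\frac{1}{z^{e_1}(z-1)^{e_2}}$ (so $m=0$? no — here $R_2$ has a \emph{zero} at $\infty$, hence $\infty$ is... let me recount: $R_2$ has no finite roots, two finite poles $0,1$, so $m=0,n=2$, and indeed $d=0\leq e$, so $\infty$ is attracting — matching $N_2$'s form). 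I would carefully tabulate which $(m,n)$ survive; I expect exactly the two listed families remain.

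**Closing the argument via Lemma~\ref{quadratic}.** Once I know a quadratic $N_R$ must realize the fixed-point/multiplier data of some $R_1$ or some $R_2$, I invoke Lemma~\ref{quadratic}: two quadratic rational functions with only simple fixed points are conjugate iff they have the same multiset of multipliers at their fixed points. The Newton maps $N_1,N_2$ have multiplier multisets $\{\frac{d_1-1}{d_1},\frac{d_2-1}{d_2},\frac{e_1+e_2+1}{e_1+e_2}\}$ and $\{\frac{e_1+1}{e_1},\frac{e_2+1}{e_2},\frac{-e_1-e_2}{-e_1-e_2-1}\}$ respectively (the multiplier at $\infty$ computed from Proposition~\ref{NFP}), so matching the data of $N_R$ to one of these exhibits the conjugacy. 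The only genuine subtlety — and the step I expect to take the most care — is the bookkeeping in the case analysis on $(m,n)$: ensuring that every configuration allowed by the degree formula \eqref{2.4} is either realized by an $R_1$-type or $R_2$-type function or ruled out by the Rational Fixed Point Theorem (in particular ruling out the polynomial case $n=0$ and checking that $m=1,n=1$ with $\infty$ fixed cannot occur because it would force $d=e$, an attracting $\infty$, but then the index sum $q_{\text{root}}-q_{\text{pole}}+q_\infty=1$ with all three at least $1$ fails unless degrees degenerate). After this enumeration the conjugacy is immediate from Lemma~\ref{quadratic}.
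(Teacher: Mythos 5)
Your overall strategy --- enumerate the admissible $(m,n)$ via the degree formula (\ref{2.4}), read off the multipliers from Proposition~\ref{NFP}, and conclude by matching multiplier sets through Lemma~\ref{quadratic} --- is exactly the paper's, and Lemma~\ref{quadratic} is the right closing device. However, your case enumeration contains two concrete errors, and since the theorem is precisely a bookkeeping statement, these are genuine gaps.

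First, in the case $d\neq e+1$, $m+n=2$, you dismiss $(m,n)=(1,1)$ as impossible, claiming it would force $d=e$. It does not: $R(z)=(z-\alpha)^{d}/(z-\beta)^{e}$ with any $d\neq e+1$ gives a genuine quadratic Newton map fixing $\alpha$, $\beta$ and $\infty$ with multipliers $\frac{d-1}{d}$, $\frac{e+1}{e}$ and $\frac{d-e}{d-e-1}$. Your residue-index objection also fails: for $d<e$ the indices are $d$, $-e$ and $e-d+1$, which sum to $1$ exactly as the Rational Fixed Point Theorem requires, so nothing degenerates. This family must therefore be matched to one of the two models rather than excluded, and the matching depends on the sign of $d-e$: for $d\le e$ take $d_1=d$, $d_2=e-d+1$ to recover the multiplier set of $N_1$, and for $d>e+1$ take $e_1=e$, $e_2=d-e-1$ to recover that of $N_2$. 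Without this, a whole family of quadratic Newton maps is unaccounted for.

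Second, in the case $d=e+1$, $m+n=3$, you list only $(m,n)=(2,1)$ and $(3,0)$ and omit $(m,n)=(1,2)$, i.e. $R(z)=\frac{(z-\alpha)^{e_1+e_2+1}}{(z-\beta_1)^{e_1}(z-\beta_2)^{e_2}}$. Its Newton map fixes $\alpha,\beta_1,\beta_2$ with multipliers $\frac{e_1+e_2}{e_1+e_2+1}$, $\frac{e_1+1}{e_1}$, $\frac{e_2+1}{e_2}$, which match those of $N_2$; it is a legitimate quadratic Newton map not covered by your enumeration. (Your exclusion of $(3,0)$ is acceptable, though the simplest reason is that $n=0$ forces $e=0$ and hence $d=1$, incompatible with $m=3$.) Repairing these two points --- treating $(1,1)$ with both signs of $d-e$ and adding $(1,2)$ --- turns your outline into the paper's proof.
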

\begin{proof}
	Note that $N_{1}$ fixes $0,1$ and $\infty$ with multipliers $\frac{d_1 -1}{d_1}, \frac{d_2 -1}{d_2}$ and  $\frac{d_1 +d_2}{d_1+d_2-1}$ respectively. Similarly $N_{2}$ fixes  $0,1$ and $\infty$ with multipliers $\frac{e_1 +1}{e_1}, \frac{e_2+1}{e_2}$ and $\frac{-e_1-e_2}{-e_1-e_2-1}$ respectively.
	\par 
Since $N_R =N_{aR}$ for every nonzero complex number $a$, it is enough to consider only those rational functions $R=\frac{P}{Q}$ where both $P$ and $Q$ are monic polynomials. Also without loss of generality it is assumed that $P$ and $Q$ have no common factor. Formula (\ref{2.4}) in Section 2 gives that for a given rational function $R$, its Newton map $N_R$ is quadratic if $d\neq e+1$ and $m+n=2$, or $d=e+1$ and $m+n=3$. The proof will be complete by showing that the set of multipliers of all the fixed points of $N_R$ is the same as that of $N_1$ or $N_2$  in each case and then by using Lemma~\ref{quadratic}. \\
\underline{\textit{Case- I :$~d\neq e+1$ and $m+n=2$:}}
\begin{itemize}
	\item $ m=2, n=0: $ $ R(z)=(z-\alpha_1)^{d_1}(z-\alpha_2)^{d_2} $ where $\alpha_1, \alpha_2\in \mathbb{C}$ are distinct and $d_1, d_2\in \mathbb{N}$.  Its Newton map $N_R$ fixes $\alpha_1, \alpha_2$ and $\infty$ with multipliers $\frac{d_1 -1}{d_1}, \frac{d_2 -1}{d_2}$ and  $\frac{d_1 +d_2}{d_1+d_2-1}$ respectively. This is conjugate to $N_1$ by Lemma~\ref{quadratic}.
	\item  $m=0, n=2:$ $R(z)=\frac{1}{(z-\beta_1)^{e_1}(z-\beta_2)^{e_2}}$ where $\beta_1,\beta_2\in \mathbb{C}$ are distinct and $e_1, e_2\in \mathbb{N}$. The Newton map $N_R$ fixes $\beta_1, \beta_2$ and $\infty$ with multipliers $\frac{e_1 +1}{e_1}, \frac{e_2+1}{e_2}$ and $\frac{-e_1-e_2}{-e_1-e_2-1}$ respectively. This is conjugate to $N_2$ by Lemma~\ref{quadratic}.
	\item $ m=1,n=1: $ $ R(z)=\frac{(z-\alpha)^{d}}{(z-\beta)^{e}} $ where $\alpha, \beta \in \mathbb{C}$ are distinct and $d, e\in \mathbb{N}$.  The Newton map $N_R$ fixes $\alpha, \beta$ and $\infty$ with multipliers $\frac{d -1}{d}, \frac{e+1}{e}$ and $\frac{d-e}{d-e-1} $ respectively. If $d \leq e$ then considering $d_1=d$ and $d_2=e-d+1$ it is seen that the set of multipliers of all the fixed points of $N_R$ is the same as that of $N_1$. Therefore, $N_R$ is conjugate to $N_1$ for $d \leq e$ by Lemma~\ref{quadratic}. Similarly for $d > e$, take $e_1=e $ and $ e_2=d-e-1$. Observe that the set multipliers of all the fixed points of $N_R$ is the same as that of $N_2$. Hence, $N_R$ is conjugate to $N_2$ whenever $d \geq e$ by Lemma~\ref{quadratic}.
	
\end{itemize}
\underline{\textit{Case- II:~  $d=e+1$ and $m+n=3$ :}} Note that none of $m,n$ can be zero in this case.
\begin{itemize}
	\item $ m=1, n=2: $ $ R(z)=\frac{(z-\alpha)^{e_1+e_2+1}}{(z-\beta_1)^{e_1}(z-\beta_2)^{e_2}} $ where $\alpha, \beta_1, \beta_2\in \mathbb{C}$ are distinct and $e_1, e_2\in \mathbb{N}$. The Newton map $N_R$ fixes $\alpha, \beta_1$ and $\beta_2$ with multipliers $ \frac{e_1 +e_2}{e_1 +e_2+1}, \frac{e_1 +1}{e_1}$ and $ \frac{e_2 +1}{e_2}$ respectively. This $N_R$ is conjugate to $N_2$ by Lemma~\ref{quadratic}.
	\item $ m=2, n=1: $ $ R(z)=\frac{(z-\alpha_1)^{d_1}(z-\alpha_2)^{d_2}}{(z-\beta)^{d_1+d_2-1}}. $ where $\alpha_1, \alpha_2,\beta\in \mathbb{C}$ are distinct and $d_1, d_2\in \mathbb{N}$. The Newton map $N_R$ fixes $\alpha_1, \alpha_2$ and $\beta$ with multipliers $\frac{d_1 -1}{d_1}, \frac{d_2 -1}{d_2}$ and $\frac{d_1 +d_2}{d_1 +d_2-1}$. This $N_R$ is conjugate to $N_1$ by Lemma~\ref{quadratic}.
\end{itemize}
 
\end{proof}
\begin{Remark} Every quadratic Newton map has three distinct fixed points. By Remark~\ref{multiplier-index}, each of their  residue  indices is a nonzero integer. If a quadratic Newton map is conjugate to a polynomial then one  residue index is $1$ and the other two are of opposite signs, by the Rational Fixed Point Theorem.
	\end{Remark}
\begin{Remark}	The map  $N_1(z)=
	N_{d_1, d_2} (z)=\frac{(d_1 + d_2 -1)z^2 +(1- d_1)z}{(d_1+d_2)z-d_1}$ has two attracting fixed points, namely $0$ and $1$, and the third fixed point $\infty$ is repelling. Similarly, $N_2 (z)=N_{e_1, e_2}(z) =\frac{(e_1 + e_2 +1)z^2 +(-1- e_1)z}{(e_1 +e_2)z-e_1}$ has two repelling fixed points, namely $0$ and $1$, and the third fixed point $\infty$ is attracting.
\end{Remark}

Barnard et al. \cite{Conj} found all possible forms of $R$ for which $N_R$ is conjugate to a polynomial. Here we simplify their result. A well-known result is used to do it here.
\begin{lemma}\label{CP}
A quadratic rational map is conjugate to a polynomial if and only if it has a superattracting fixed point.  
\end{lemma}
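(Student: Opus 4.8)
The plan is to reduce both implications to the normal form at $\infty$, using only Proposition~\ref{infty} and Lemma~\ref{conjugacy}.

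For the forward implication, suppose the quadratic rational map $f$ is conjugate to a polynomial $P$, say $f=\phi\circ P\circ\phi^{-1}$ for a M\"obius map $\phi$. Conformal conjugacy preserves the degree, so $P$ has degree two. Writing $P(z)=a_0z^2+a_1z+a_2$ as a rational function with constant denominator and applying Proposition~\ref{infty} (with $k=2$, $l=0$), we see that $\infty$ is a fixed point of $P$ with multiplier $0$, i.e. a superattracting fixed point. By Lemma~\ref{conjugacy}(1), $\phi(\infty)$ is then a superattracting fixed point of $f$.

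For the converse, let $z_0$ be a superattracting fixed point of $f$ and choose a M\"obius map $\phi$ with $\phi(z_0)=\infty$. Then $g=\phi\circ f\circ\phi^{-1}$ is again a quadratic rational map, and by Lemma~\ref{conjugacy}(1) the point $\infty$ is a superattracting fixed point of $g$. Write $g=P/Q$ in lowest terms; since $\deg g=2$ we have $\max\{\deg P,\deg Q\}=2$. Because $g(\infty)=\infty$, Proposition~\ref{infty} forces $\deg P>\deg Q$, so $\deg P=2$. Because the multiplier of $\infty$ for $g$ equals $0$, Proposition~\ref{infty} further forces $\deg P>\deg Q+1$, hence $\deg Q=0$. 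Thus $g$ is a polynomial, and $f$ is conjugate to a polynomial, which is what we wanted.

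The argument is essentially bookkeeping with the local form of a rational map at $\infty$, so no step is genuinely hard; the only point deserving a word of care is the bijection between the two formulations, namely that a superattracting fixed point of a rational map is exactly a fixed critical point (multiplier $0$ at a finite point means $f'=0$ there; at $\infty$ it means local degree at least two). Read this way, the statement just says that a quadratic rational map is conjugate to a polynomial precisely when one of its two critical points is fixed, matching the familiar fact that $\infty$ is a fixed critical point of every quadratic polynomial.
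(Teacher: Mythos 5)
Your proof is correct, and for the substantive direction it takes a genuinely different route from the paper. For ``superattracting fixed point $\Rightarrow$ conjugate to a polynomial,'' the paper argues via exceptional points: it shows, by a proper-map degree count near the superattracting point $w_0$, that $w_0$ has no preimage other than itself, so its grand orbit is a singleton, and then invokes the characterization (Lemma~\ref{Be}) of maps conjugate to polynomials. You instead normalize by a M\"obius map sending the superattracting point to $\infty$ and read off from Proposition~\ref{infty} that multiplier $0$ at a fixed point at $\infty$ forces $\deg P>\deg Q+1$; with $\deg P=2$ this pins $\deg Q=0$, so the conjugated map is literally a polynomial. Your computation is more elementary and self-contained (it needs only the normal form at $\infty$ and the conjugacy invariance of multipliers, both already in the paper), and it makes the quadratic hypothesis do its work transparently through the inequality $\deg Q<\deg P-1\le 1$. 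The paper's argument, while slightly more delicate at the degree-counting step, is the one that generalizes: the exceptional-point criterion is what one uses in higher degree, where a superattracting fixed point alone no longer suffices (its local degree must equal the global degree). The easy direction is the same in both treatments. One small point worth making explicit in your write-up: Proposition~\ref{infty} gives multiplier $b_0/a_0$ when $k=l+1$, and this is nonzero precisely because $b_0$ is the (nonzero) leading coefficient of the denominator; that is the step that excludes $\deg Q=1$.
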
 
The following result (Theorem 4.1.2,\cite{Be}) is used to prove this lemma. A point $z_0$ is called an exceptional point of a rational function $f$ if its grand orbit  $[z]=\{z: f^k(z)=z_0~\mbox{ for some }~k > 0 \} \cup\{f^k(z_0): k \geq 0 \}$ is finite.
\begin{lemma}\label{Be}
	A non-constant rational function $f$ is conjugate to a polynomial if and only if there is an exceptional point $w\in \widehat{\mathbb{C}}$ such that $[w]$ is singleton.
	
\end{lemma}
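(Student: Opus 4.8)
The plan is to deduce both implications from Lemma~\ref{Be}, which reduces the whole statement to exhibiting (or ruling out) a point whose grand orbit is a singleton.

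\emph{Proof that a superattracting fixed point forces conjugacy to a polynomial.} Suppose the quadratic rational map $f$ has a superattracting fixed point $w$. The key step is to show that $w$ is its own only preimage. Since $f'(w)=0$ (interpreting this via the chart at $\infty$ if $w=\infty$), the local degree of $f$ at $w$ is at least $2$; but $\deg f=2$, so the local degree at $w$ is exactly $2$. Hence the equation $f(z)=w$ has $w$ as a double root and therefore no other root, i.e.\ $f^{-1}(\{w\})=\{w\}$. By an immediate induction $f^{-k}(\{w\})=\{w\}$ for all $k\ge 0$, and combined with $f^k(w)=w$ this shows the grand orbit $[w]$ equals $\{w\}$. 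In particular $[w]$ is finite, so $w$ is exceptional with singleton grand orbit, and Lemma~\ref{Be} gives that $f$ is conjugate to a polynomial.

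\emph{Proof of the converse.} Suppose $f=\phi\circ P\circ\phi^{-1}$ for a M\"{o}bius map $\phi$ and a polynomial $P$. Degree is a conjugacy invariant, so $P$ is a quadratic polynomial, say $P(z)=a_0z^2+a_1z+a_2$ with $a_0\neq 0$. Applying Proposition~\ref{infty} with $k=2$ and $l=0$ (so $k>l+1$), the point $\infty$ is a fixed point of $P$ with multiplier $0$; that is, $\infty$ is a superattracting fixed point of $P$. By Lemma~\ref{conjugacy}(1), $\phi(\infty)$ is then a fixed point of $f$ with the same multiplier $0$, so $f$ has a superattracting fixed point.

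The two directions together establish the equivalence. Almost everything here is bookkeeping: Proposition~\ref{infty} and Lemma~\ref{conjugacy} handle the converse outright, and Lemma~\ref{Be} is used as a black box. The one place that genuinely needs care is the local-degree count in the first direction, namely the assertion that a critical fixed point of a degree-two map absorbs its entire preimage set so that the grand orbit collapses to a point; this is the crux on which the argument turns.
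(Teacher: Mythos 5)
What you have written is not a proof of Lemma~\ref{Be} at all; it is a proof of Lemma~\ref{CP}. The statement you were asked to establish is the general criterion for an arbitrary non-constant rational function (conjugate to a polynomial $\iff$ some exceptional point has a singleton grand orbit), whereas your argument (i) restricts from the start to the quadratic case, (ii) characterises conjugacy to a polynomial in terms of superattracting fixed points rather than in terms of singleton grand orbits, and (iii) explicitly invokes Lemma~\ref{Be} as a black box in its first half. Interpreted as a proof of Lemma~\ref{Be}, point (iii) makes the argument circular, and points (i)--(ii) make it a proof of a different statement. In the paper itself Lemma~\ref{Be} carries no proof; it is cited directly from Beardon (Theorem 4.1.2 of the book), and the proof environment that immediately follows it in the source is headed ``Proof of Lemma~\ref{CP}''. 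You appear to have read that block as the proof of Lemma~\ref{Be} and reproduced its content.

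If you want a proof of the statement actually asked, the idea is elementary and does not pass through superattracting fixed points. Suppose $[w]=\{w\}$ for some $w$; pick a M\"obius map $\phi$ with $\phi(w)=\infty$ and set $g=\phi\circ f\circ\phi^{-1}$. Then $g^{-1}(\infty)=\phi(f^{-1}(w))=\{\infty\}$, so $g$ has no finite poles and is therefore a polynomial. Conversely, if $f=\phi^{-1}\circ P\circ\phi$ for a polynomial $P$ and M\"obius $\phi$, then $P^{-1}(\infty)=\{\infty\}$ and $P(\infty)=\infty$, so the grand orbit of $\infty$ under $P$ is $\{\infty\}$, and hence the grand orbit of $\phi^{-1}(\infty)$ under $f$ is the singleton $\{\phi^{-1}(\infty)\}$. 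That is the content of Lemma~\ref{Be}. Your local-degree computation is perfectly sound and is in fact a cleaner way to run the paper's preimage-counting argument in the proof of Lemma~\ref{CP}, but that is a different lemma.
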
 
\begin{proof}[Proof of Lemma \ref{CP}]
	Let $f$ be a quadratic rational function with a superattracting fixed point $w_0$. If there is a point $z_0$ different from $w_0$ such that $f(z_0)=w_0$ then there exists a neighborhood of $w_0$, say $N_{0}$ such that $f^{-1}(N_{0})$ has two components, one $\Omega_1$, containing $w_0$ and other $\Omega_2$, containing $z_0$. However, $f:\Omega_1 \to N_{0}$ is a proper map of degree two (since $w_0$ is critical) and every point of $N_{0}$ has at least one pre-image in $\Omega_2$. This gives that each point in $N_{0}-\{w_0\}$ has at least three pre-images, which contradicts the assumption that $f$ is quadratic. Hence $w_0$ is the only pre-image of itself under $f$, and from Lemma \ref{Be}, we conclude that $f$ is conjugate to a polynomial.
	\par 
	Conversely, if a quadratic rational map $f$ is conjugate to a polynomial $P$, i.e., $ (\phi \circ f \circ \phi^{-1})(z)=P(z) $ for some M\"{o}bius map $\phi$, then $f(\phi^{-1}(\infty))=\phi^{-1}(\infty)$ and  its multiplier is $0$. This gives that $\phi^{-1}(\infty)$ is a superattracting fixed point of $f$ by Lemma~\ref{conjugacy}.
\end{proof}
Here is the desired description of all rational functions whose Newton maps are quadratic and are conjugate to a polynomial.
\begin{theorem}\label{Poly}
	A quadratic Newton map $N_R$ of a rational function $R$ is conjugate to a polynomial if and only if $R$ is one of the following.
	\begin{enumerate}
		\item $ R(z)=(z-\alpha_1)(z-\alpha_2)^{d_2} $ where $\alpha_1, \alpha_2\in \mathbb{C}$ are distinct and $d_2\in \mathbb{N}$.
		\item $ R(z)=\frac{(z-\alpha_1)}{(z-\beta_1)^{e_1}} $ where $\alpha_1, \beta_1\in \mathbb{C}$ are distinct and $e_1\in \mathbb{N}$.
		\item $ R(z)=\frac{(z-\alpha_1)(z-\alpha_2)^{d_2}}{(z-\beta_1)^{d_2}}. $ where $\alpha_1, \alpha_2,\beta_1\in \mathbb{C}$ are distinct and $d_2\in \mathbb{N}$.
		\item $ R(z)=\frac{(z-\alpha_1)^{d}}{(z-\beta_1)^{d}} $ where $\alpha_1, \beta_1\in \mathbb{C}$ are distinct and $d\in \mathbb{N}$.
	\end{enumerate}
\end{theorem}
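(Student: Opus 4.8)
The plan is to reduce the statement to a single, easily checked criterion. By Lemma~\ref{CP} a quadratic Newton map $N_R$ is conjugate to a polynomial precisely when it has a superattracting fixed point, and Proposition~\ref{NFP} pins down exactly which fixed points of $N_R$ can be superattracting: a finite root $\alpha_i$ of $R$ is superattracting for $N_R$ if and only if its multiplicity $d_i$ equals $1$ (its multiplier being $\frac{d_i-1}{d_i}$), a finite pole $\beta_j$ is always repelling, and the point $\infty$ is a superattracting fixed point of $N_R$ exactly when $\deg P=\deg Q$ (writing $d=\deg P$ and $e=\deg Q$; note that $d=e$ forces $d\neq e+1$, so $\infty$ is then genuinely a fixed point, with multiplier $\frac{d-e}{d-e-1}=0$). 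Hence the theorem is equivalent to the assertion: among rational functions $R$ with $N_R$ quadratic, those whose Newton map is conjugate to a polynomial are exactly those having either a simple finite root or $\deg P=\deg Q$. Throughout I would use, as in the preceding proofs, that $N_R=N_{cR}$ for nonzero $c$ and that $R$ may be taken in reduced form.

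For the ``if'' direction I would simply run through the four families. For $R$ of type (1), (2), (3) one reads off $(m,n)=(2,0),(1,1),(2,1)$ and the corresponding values of $d,e$; plugging these into Equation~(\ref{2.4}) confirms $\deg N_R=2$, and in each of them $\alpha_1$ is a simple root, so $N_R$ fixes $\alpha_1$ with multiplier $0$. For $R$ of type (4) we have $(m,n)=(1,1)$ and $\deg P=\deg Q=d$, so again $\deg N_R=2$ by (\ref{2.4}), and now it is $\infty$ that is a superattracting fixed point. In all four cases Lemma~\ref{CP} then gives that $N_R$ is conjugate to a polynomial.

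For the ``only if'' direction I would go back to the five configurations of a quadratic Newton map isolated in the proof of Theorem~\ref{2quadratic-Newtonmaps}: either $d\neq e+1$ with $(m,n)\in\{(2,0),(0,2),(1,1)\}$, or $d=e+1$ with $(m,n)\in\{(1,2),(2,1)\}$; for each I would ask, using Proposition~\ref{NFP}, whether a superattracting fixed point can occur. The case $(0,2)$ is impossible (no finite roots, and $\infty$, when a fixed point, has nonzero multiplier since $d\neq e$), and so is $(1,2)$ (the single finite root has multiplicity $e_1+e_2+1\geq 3$ and $\infty$ is not a fixed point because $d=e+1$). For $(2,0)$ one of $d_1,d_2$ must be $1$, and relabelling so that $d_1=1$ produces exactly family (1). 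For $(2,1)$ the equality $d=e+1$ again forbids $\infty$ being superattracting, so one of $d_1,d_2$ must equal $1$; choosing $d_1=1$ (whence $e=d_2$) gives family (3). Finally, for $(1,1)$ with $R=\frac{(z-\alpha)^d}{(z-\beta)^e}$ the possibilities are $d=1$, giving family (2), and $d=e$, giving family (4); together these account for the whole case. Collecting the surviving possibilities yields exactly the list (1)--(4).

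I do not expect a genuine obstacle here: the only new ingredient is Lemma~\ref{CP}, and the rest is organized casework driven by Proposition~\ref{NFP}. The step needing the most care is the ``only if'' bookkeeping --- correctly identifying, in each surviving configuration, \emph{which} fixed point is superattracting, carrying out the relabellings of the roots consistently, and recognizing that the families overlap (for instance the degenerate subcase $d=e=1$ of $(m,n)=(1,1)$, in which $R$ is already a polynomial of degree one, lies in both (2) and (4)), so that no spurious cases are lost or double-counted.
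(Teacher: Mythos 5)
Your proposal is correct and follows essentially the same route as the paper: reduce to the existence of a superattracting fixed point via Lemma~\ref{CP}, then run Proposition~\ref{NFP} over the five configurations of quadratic Newton maps from Theorem~\ref{2quadratic-Newtonmaps}; your version merely makes the case bookkeeping more explicit. (One trivial slip in your closing aside: when $d=e=1$ the function $R(z)=\frac{z-\alpha_1}{z-\beta_1}$ is a M\"obius map, not a polynomial of degree one, but this does not affect the argument.)
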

\begin{proof}
	The map $N_R$ is conjugate to a polynomial if and only if it has a superattracting fixed point, by Lemma~\ref{CP}. This amounts to a fixed point with multiplier $0$. This corresponds to one of the following cases.
	\par If $ d\neq e+1 $ then $ R(z)$ is $(z-\alpha_1)(z-\alpha_2)^{d_2} $ for some distinct $\alpha_1, \alpha_2\in \mathbb{C}$ and $d_2\in \mathbb{N}$ or is 
		 $ \frac{(z-\alpha_1)}{(z-\beta_1)^{e_1}} $ for some distinct $\alpha_1, \beta_1\in \mathbb{C}$ and $e_1\in \mathbb{N}$. In these cases, $\alpha_1$ is the superattracting fixed point of $N_R$. The only other possible form of $R$ in this case is $\frac{(z-\alpha_1)^{d}}{(z-\beta_1)^{d}}$ for some distinct $\alpha_1, \beta_1\in \mathbb{C}$ and $d \in \mathbb{N}$. Here $\infty$ is the desired superattracting fixed point. 
\par  
If $ d= e+1 $ then $ R(z)=\frac{(z-\alpha_1)(z-\alpha_2)^{d_2}}{(z-\beta_1)^{d_2}}$ for some distinct $\alpha_1, \alpha_2,\beta_1\in \mathbb{C}$ and $d_2\in \mathbb{N}$, where $\alpha_1$ is the superattracting fixed point. 
\end{proof}
  
The Fatou set and the Julia set of all quadratic Newton maps are to be determined now. We use some known results. 
\begin{lemma}[\cite{brolin}]
	If a rational function $f$ of degree at least two has two completely invariant attracting domains containing all its critical points then its Julia set is a Jordan curve.
	\label{Jordancurve} 
\end{lemma}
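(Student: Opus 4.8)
The plan is to reduce the statement to a topological characterization of $\mathcal{J}(f)$ as a common boundary, after establishing three structural facts about the two given attracting domains $U_1,U_2$, with attracting fixed points $p_1,p_2$: (i) $\mathcal{F}(f)=U_1\cup U_2$; (ii) $\partial U_1=\partial U_2=\mathcal{J}(f)$, both $U_i$ are simply connected, and $\mathcal{J}(f)$ is connected; (iii) $f$ is hyperbolic, so $\mathcal{J}(f)$ is locally connected.

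For (i): the $U_i$ are invariant and contain every critical point of $f$, so the forward orbit of each critical point stays in $U_1\cup U_2$ and converges to $p_1$ or $p_2$; hence the closure $K$ of the union of all forward iterates of the critical points is a compact subset of $U_1\cup U_2\subseteq\mathcal{F}(f)$, in particular $K\cap\mathcal{J}(f)=\emptyset$. Every Fatou component is eventually periodic, and a periodic cycle of Fatou components is attracting or parabolic --- and then contains a critical point by Lemma~\ref{basic}(1) --- or is a cycle of Siegel discs or Herman rings --- and then has its boundary inside $K$ by Lemma~\ref{basic}(2). The second case is impossible since $K\cap\mathcal{J}(f)=\emptyset$; in the first, the cycle must be $\{U_1\}$ or $\{U_2\}$, whence by complete invariance the component itself is $U_1$ or $U_2$. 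For (ii): $\partial U_1$ is closed, completely invariant (being the boundary of a completely invariant open set), and infinite, hence it contains $\mathcal{J}(f)$, the smallest closed completely invariant set with more than two points, and, lying inside $\mathcal{J}(f)$, equals it; likewise $\partial U_2=\mathcal{J}(f)$. Then $\widehat{\mathbb{C}}\setminus U_1=\overline{U_2}$ is connected, so $U_1$ and likewise $U_2$ is simply connected; and since $\mathcal{F}(f)$ is the disjoint union of two simply connected components, $\mathcal{J}(f)$ is connected (for instance by Mayer--Vietoris applied to $\widehat{\mathbb{C}}=\overline{U_1}\cup\overline{U_2}$, whose intersection is $\mathcal{J}(f)$). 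For (iii): $K\cap\mathcal{J}(f)=\emptyset$ means $f$ is hyperbolic, hence uniformly expanding on $\mathcal{J}(f)$ in a suitable conformal metric, and a connected hyperbolic Julia set is locally connected.

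Given (i)--(iii), $\mathcal{J}(f)$ is a locally connected continuum whose complement has exactly two components $U_1,U_2$, each having $\mathcal{J}(f)$ as its full boundary; by the classical topological characterization of the Jordan curve, $\mathcal{J}(f)$ is a simple closed curve. I expect this last topological step to be the main obstacle, so I would be ready to carry it out directly. Local connectivity of $\partial U_i=\mathcal{J}(f)$ lets Carath\'eodory's theorem extend a Riemann map $\psi_i\colon\mathbb{D}\to U_i$ (with $\psi_i(0)=p_i$) to a continuous surjection $\overline{\mathbb{D}}\to\overline{U_i}$, so $\psi_i|_{\partial\mathbb{D}}$ maps the circle continuously onto $\mathcal{J}(f)$; it then suffices to show $\psi_1|_{\partial\mathbb{D}}$ is injective, for it is then a homeomorphism onto $\mathcal{J}(f)$. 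Conjugating $f$ by $\psi_1$ yields a finite Blaschke product $B_1$ of degree $\deg f$ fixing $0$, whose boundary restriction is topologically conjugate to $\theta\mapsto(\deg f)\theta$ and hence expands every non-degenerate arc of $\partial\mathbb{D}$ onto the whole circle; so no arc can be collapsed by $\psi_1$, as that would make $\mathcal{J}(f)$ a point. A ``pinch'' --- a point of $\mathcal{J}(f)$ with two or more preimages under $\psi_1|_{\partial\mathbb{D}}$ --- is then excluded because the internal rays of $U_1$ landing there would cut $U_1$ into several cross-cuts meeting only at that point, while the connected domain $U_2$, which also has that point on its boundary, can touch only one of those cross-cuts; this is exactly where the hypothesis that $\mathcal{J}(f)$ is the \emph{full} common boundary of \emph{exactly two} complementary domains is indispensable.
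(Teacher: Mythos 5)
The paper does not prove this lemma at all --- it is quoted from Brolin's 1967 paper as a known result --- so there is no internal proof to compare against. Judged on its own, your argument is correct and is essentially the classical proof (the one found in Brolin and in Carleson--Gamelin for maps with two completely invariant basins). Each step checks out: complete invariance plus the critical-point count forces $\mathcal{F}(f)=U_1\cup U_2$ and $\partial U_1=\partial U_2=\mathcal{J}(f)$; the complement of each $U_i$ is the connected set $\overline{U_{3-i}}$, so both basins are simply connected and $\mathcal{J}(f)$ is connected; the postcritical set accumulates only on $p_1,p_2$, so $f$ is hyperbolic and the Riemann maps of the $U_i$ extend continuously by Carath\'eodory; the semiconjugacy $\psi_1\circ B_1=f\circ\psi_1$ on $\partial\mathbb{D}$ rules out collapsed arcs, and the two-internal-rays ``pinch'' argument rules out identified boundary points precisely because the second basin $U_2$ must lie entirely in one complementary component of the resulting Jordan arc while $\mathcal{J}(f)\setminus\{w\}$ meets both --- contradicting $\partial U_2=\mathcal{J}(f)$. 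The only places where you lean on external theorems (every Fatou component is eventually periodic; connectivity of $\mathcal{J}$ from simple connectivity of all components; local connectivity of the boundary of a simply connected invariant basin of a hyperbolic map) are standard and correctly invoked. In short, you have supplied a complete proof of a statement the authors only cite.
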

\begin{lemma}[\cite{Be}]
	If a rational function $f$ of degree at least two has an  invariant attracting domain containing all the critical points of $f$  then its Julia set is  totally disconnected.
	\label{cantor} 
\end{lemma}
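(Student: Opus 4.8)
The plan is to turn the hypothesis into a ``polynomial‑like'' picture and then run the standard Cantor‑repeller argument. Write $d=\deg f\ge 2$ and let $z_0\in U$ be the attracting fixed point, so that $U$ is the immediate basin of $z_0$. Since every critical point of $f$ lies in $U$ and $U$ is invariant ($f(U)=U$), every critical \emph{value} of $f$ lies in $U$, hence converges to $z_0$ under iteration. The goal of the first (and main) step is to produce a Jordan domain $D$ with $z_0\in D$, $\overline D\subset U$, $f(\overline D)\subset D$, and with \emph{all} critical values of $f$ contained in $D$.

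Granting such a $D$, here is how I would finish. Since $f(\overline D)\subset D$, the open set $D_1:=f^{-1}(D)$ contains $\overline D$, and since $D$ contains all critical values, $D_1$ contains all critical points. Hence $f$ restricts to a proper \emph{unbranched} map of degree $d$ from $\widehat{\mathbb C}\setminus \overline{D_1}$ onto $W:=\widehat{\mathbb C}\setminus\overline D$, i.e.\ a covering map; as $W$ is a topological disc it is simply connected, so this preimage is a disjoint union of $d$ topological discs $W_1,\dots,W_d$, each mapped homeomorphically onto $W$ by $f$ and each compactly contained in $W$ (this last point because $\overline D\subset D_1$). The inverse branches $g_i=(f|_{W_i})^{-1}\colon W\to W_i$ are then uniform strict contractions of the hyperbolic metric of $W$, by the Schwarz--Pick lemma together with $\overline{W_i}\Subset W$. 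It follows in the usual way that $\Lambda:=\bigcap_{n\ge 0}f^{-n}(\overline{W_1\cup\dots\cup W_d})$ is totally disconnected: any connected subset of $\Lambda$ has a well‑defined itinerary in $\{1,\dots,d\}^{\mathbb N}$ which, being continuous on a connected set, is constant, so that subset is contained in a nested intersection $\bigcap_n\overline{g_{a_1}\circ\cdots\circ g_{a_n}(W)}$ of sets whose hyperbolic diameters shrink to $0$, i.e.\ in a single point. Finally $D\subset U\subset\mathcal F(f)$, so $f^{-1}(D)\subset\mathcal F(f)$, whence $\mathcal J(f)\subset\widehat{\mathbb C}\setminus\overline{D_1}$; since $\mathcal J(f)$ is completely invariant this gives $\mathcal J(f)\subset\Lambda$, and therefore $\mathcal J(f)$ is totally disconnected.

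The step I expect to be the real obstacle is the construction of $D$. I would start with a small linearizing Jordan disc $B\ni z_0$ with $f(\overline B)\subset B$ (Koenigs coordinates, or B\"ottcher coordinates when $z_0$ is superattracting). As the finitely many critical values lie in the basin $U$, some iterate $f^{N}$ sends all of them into $B$; joining each critical value to $z_0$ by an arc in $U$ and setting $\Gamma_0=\overline B\cup(\text{these arcs})$, the set $K=\bigcup_{k\ge 0}f^{k}(\Gamma_0)$ is a compact, connected, forward‑invariant subset of $U$ (the union stabilizes after finitely many steps because every compact subset of $U$ is eventually absorbed by $B$). Because $f|_U\colon U\to U$ has critical points it is not a covering of $U$, so by Schwarz--Pick it strictly contracts the hyperbolic metric $\rho_U$, uniformly on a neighbourhood of the compact set $K$; hence a sufficiently small $\rho_U$‑neighbourhood $D$ of $K$ satisfies $\overline D\subset U$ and $f(\overline D)\subset D$, and it contains all critical values since $K$ does. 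The genuinely fiddly point is to arrange that $D$ be a \emph{Jordan} domain rather than merely a connected one; this forces one to choose the connecting arcs so that $K$ is a tamely embedded tree‑like continuum whose small neighbourhoods are Jordan domains.

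This scheme runs parallel to Brolin's proof of the companion statement (Lemma~\ref{Jordancurve}), the only change being that here the single trapping disc $D$ leaves $d\ge 2$ complementary ``holes'' instead of one, so one obtains a Cantor set rather than a Jordan curve; alternatively one may simply invoke \cite{Be}.
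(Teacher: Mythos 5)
The paper offers no proof of this lemma --- it is quoted from \cite{Be} (Theorem 9.8.1 there) --- so your proposal has to be judged against the standard textbook argument, which is indeed the one you outline: build a Jordan domain $D$ with $z_0\in D$, $\overline D\subset U$, $f(\overline D)\subset D$ and all critical values in $D$, then run the itinerary/contraction argument on $W=\widehat{\mathbb C}\setminus\overline D$. Your second half is essentially correct: granting such a $D$, the preimage $f^{-1}(W)=\widehat{\mathbb C}\setminus f^{-1}(\overline D)$ (note this is what the unbranched degree-$d$ restriction is defined on, possibly a hair smaller than $\widehat{\mathbb C}\setminus\overline{D_1}$; the containment $\overline{W_i}\Subset W$ you need still follows from $\overline D\subset D_1$) is $d$ discs mapped homeomorphically onto $W$, the inverse branches contract the hyperbolic metric of $W$ uniformly, and $\mathcal J(f)\subset\Lambda$ is totally disconnected. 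The construction of the forward-invariant compact connected set $K$ and of its trapping neighbourhood $D$ via strict Schwarz--Pick contraction of $\rho_U$ is also sound.

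The genuine gap is exactly where you flag it, and your proposed repair does not close it. The set $K=\bigcup_{k\ge0}f^k(\Gamma_0)$ contains the forward images $f^k(\gamma_j)$ of the connecting arcs, and these are \emph{not} at your disposal: once $\gamma_j$ is chosen, $f(\gamma_j),\dots,f^{N_0}(\gamma_j)$ are dictated by $f$; they may cross one another, cross $\overline B$, and close up loops. So ``choosing the connecting arcs so that $K$ is a tamely embedded tree-like continuum'' is not a choice you are free to make. If $K$ (hence $\overline D$) separates the sphere, then $W$ is disconnected or fails to be simply connected, and the covering-space step collapses: a component of $f^{-1}(W)$ could, for instance, be an annulus covering an essential subannulus of $W$ with degree $>1$, and then neither the itinerary nor the ``$d$ homeomorphic preimages'' structure is available. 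Note that everything else in the proof is soft, so this topological point is in effect the entire content of the lemma. The usual remedies (trapping only $\overline B$ together with the \emph{finite} set of postcritical points outside $B$, for which forward invariance is automatic, and only then arranging the connectivity; or filling in the complementary components of $\overline D$ that miss $\mathcal J(f)$ and running the argument with finitely many simply connected pieces) each require a further careful argument. Since the lemma is quoted, the citation to \cite{Be} is what the paper relies on, and your sketch should either carry out this construction in full or defer to that reference for it.
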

We now present the proof of the main theorem.
\begin{proof}[Proof of Theorem\ref{main}]
	In view of Theorem~\ref{2quadratic-Newtonmaps} and Lemma~\ref{conjugacy}, it is enough to determine the dynamics of the maps $N_1$ and $N_2$. Note that each such map has at most two distinct critical points.
	\par
	There are two attracting fixed points of $N_1$ and each corresponding attracting domain contains a critical point by Lemma~\ref{basic}. The map $N_1$ restricted to each of these attracting domains is a proper map of degree at least two. In fact, the degree is exactly two as $N_R$ is quadratic.  In other words, each such attracting domain is completely invariant. The Julia set is a Jordan curve by Lemma~\ref{Jordancurve}.
	\par 
	 The map $N_2$ has a single attracting fixed point, namely $\infty$. There is a critical point in the  attracting domain corresponding to $\infty$ by Lemma~\ref{basic}. 
	Note that $N_2 (z) =\frac{(e_1 + e_2 +1)z^2 +(-1- e_1)z}{(e_1 +e_2)z-e_1}$ for some natural numbers $e_1, e_2$ giving that $N_2 (\overline{z})=\overline{N_2(z)}$ for all $z$. Further,  the critical points of $N_2$, being the roots of a quadratic polynomial with real coefficients,  are complex conjugates of each other.  Therefore the other critical point also belongs to the attracting domain. By Lemma~\ref{cantor}, the Julia set of $N_2$ is totally disconnected.
	    
\end{proof}
\section{Cubic and higher degree Newton maps}
Barnard et al. proved that there does not exist any rational function whose Newton map is conjugate to $z^3$. We prove this for a larger class of polynomials, namely those with a single finite critical point. Such polynomials are called unicritical polynomials and are of the form $a (z-\alpha)^ \sigma + b$ for some $a, \alpha, b \in \mathbb{C}, a \neq 0$ and $\sigma  \in \mathbb{N}$.
\begin{theorem}
 There is no rational function whose Newton map is conjugate to $a (z-\alpha)^ \sigma + b$ for any $a, \alpha, b \in \mathbb{C}, a \neq 0$ and for any integer $\sigma\geq 3$.
\label{unicritical}
 \end{theorem}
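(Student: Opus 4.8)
The plan is to argue by contradiction. Suppose $N:=N_R$ is conjugate, via a M\"{o}bius map $\phi$, to $P(z)=a(z-\alpha)^\sigma+b$ for some $a,\alpha,b\in\mathbb{C}$ with $a\neq 0$ and some integer $\sigma\geq 3$. Since conjugacy preserves degree, $\deg N=\deg P=\sigma$; and by Lemma~\ref{conjugacy} together with Remark~\ref{multiplier}, every fixed point of $P$ is simple and its multiplier has the form $p/q$ with $p\in\mathbb{N}\cup\{0\}$, $q\in\mathbb{N}$, $|p-q|=1$. In particular (Remark~\ref{multiplier-index}) the residue index of $P$ at each finite fixed point, which for a simple fixed point with multiplier $\lambda$ equals $1/(1-\lambda)$, is a nonzero integer. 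I would then split into the cases $b=\alpha$ and $b\neq\alpha$.

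The case $b=\alpha$ is disposed of quickly. Here $P(z)=a(z-\alpha)^\sigma+\alpha$, which is conjugate (by a translation followed by a scaling) to $z^\sigma$, so $N$ is conjugate to $z^\sigma$. But $z^\sigma$ fixes every $(\sigma-1)$-th root of unity with multiplier exactly $\sigma$, and there is at least one such point since $\sigma\geq 3$. This contradicts the fact (Remark~\ref{multiplier}) that an integer fixed point multiplier of a Newton map must lie in $\{0,2\}$, because $\sigma\notin\{0,2\}$.

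For the remaining, main case $b\neq\alpha$, I would first change the variable by a translation so that $P(z)=az^\sigma+c$ with $c:=b-\alpha\neq 0$ (this changes neither the conjugacy class nor the hypotheses). The finite fixed points of $P$ are the $\sigma$ roots $u_1,\dots,u_\sigma$ of $az^\sigma-z+c=0$; none is zero, they are pairwise distinct, and no multiplier equals $1$ (all because $N$, hence $P$, has only simple fixed points). Using $au_k^\sigma=u_k-c$ one gets $P'(u_k)=\sigma au_k^{\sigma-1}=\sigma(u_k-c)/u_k$, hence the residue index $\iota_k=\bigl(1-P'(u_k)\bigr)^{-1}=u_k/\bigl(\sigma c-(\sigma-1)u_k\bigr)$, which by the previous paragraph is a nonzero integer for every $k$. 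Solving, $v_k:=1/u_k=\bigl(1+(\sigma-1)\iota_k\bigr)/(\sigma c\,\iota_k)$. The crucial point is that $v_1,\dots,v_\sigma$ are precisely the roots of $cz^\sigma-z^{\sigma-1}+a=0$ (apply $z\mapsto 1/z$ to the equation defining the $u_k$), and because $\sigma\geq 3$ all coefficients of $z^{\sigma-2},\dots,z^1$ in this polynomial vanish, so Vieta's formulas give $\sum_k v_k=1/c$ and $\sum_{i<j}v_iv_j=0$, whence $\sum_k v_k^2=1/c^2$. Substituting the formula for $v_k$ into these two identities, the unknown $c$ cancels, and after routine simplification one obtains two relations among the \emph{integers} $\iota_k$, namely $\sum_k 1/\iota_k=\sigma(2-\sigma)$ and $\sum_k 1/\iota_k^{\,2}=(\sigma-1)^3+1$. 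The second is impossible: its left-hand side is a sum of $\sigma$ terms, each of the form $1/(\text{nonzero integer})^2$ and hence at most $1$, so it cannot exceed $\sigma$, while $(\sigma-1)^3+1>\sigma$ for every $\sigma\geq 3$ (equivalently $(\sigma-1)^2>1$). This contradiction completes the proof.

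The only genuine computation is deriving those two arithmetic identities from Vieta's formulas; the one delicate point is checking that enough coefficients of $cz^\sigma-z^{\sigma-1}+a$ vanish, and this is exactly where $\sigma\geq 3$ is used (for $\sigma=2$ only $\sum_k v_k=1/c$ survives, there is no obstruction, and indeed quadratic Newton maps conjugate to polynomials do exist, cf.\ Theorem~\ref{Poly}). I do not expect any deeper obstacle: the integrality of the residue indices forced by the Newton-map structure is restrictive enough that a single symmetric-function identity already overdetermines the $\iota_k$.
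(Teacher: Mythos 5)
Your proposal is correct, and its skeleton coincides with the paper's: argue by contradiction, normalize the unicritical polynomial by an affine change of variable, exploit the fact (Lemma~\ref{conjugacy}, Remark~\ref{multiplier}, Remark~\ref{multiplier-index}) that every fixed point is simple with multiplier $\frac{p}{q}$, $|p-q|=1$, hence with residue index a nonzero integer, feed this into Vieta's formulas for the fixed-point polynomial, and dispose of the monomial case by the multiplier-$\sigma$ observation. Where you diverge is in the endgame. The paper normalizes to $z^{\sigma}+c$, writes each fixed point as $z_i=\frac{c\sigma q_i}{\sigma q_i-p_i}$, and uses the single identity $\sum_i z_i=0$ together with the positivity of every coefficient $\frac{q_i}{\sigma q_i-p_i}$ (which is where $\sigma\geq 3$ enters) to force $c=0$, reducing to the monomial case; this is a one-line positivity argument. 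You instead pass to the reciprocals $v_k=1/u_k$, extract the two identities $\sum_k 1/\iota_k=\sigma(2-\sigma)$ and $\sum_k 1/\iota_k^{\,2}=(\sigma-1)^3+1$, and contradict the second by the bound $\sum_k 1/\iota_k^{\,2}\leq\sigma$; both identities check out, and the size bound does fail exactly for $\sigma\geq 3$, so the argument is sound. The trade-off: your route needs the second elementary symmetric function (hence the vanishing of the $z^{\sigma-2}$ coefficient, again the place where $\sigma\geq 3$ is used) and a bit more computation, but it never needs to conclude $c=0$ and it keeps the leading coefficient $a$ general throughout; the paper's route is shorter because a single Vieta identity plus a sign argument already kills $c$. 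One small economy you could borrow from the paper: your first identity alone nearly suffices (for $\sigma\geq 4$ it already violates $\left|\sum_k 1/\iota_k\right|\leq\sigma$), so the second identity is only genuinely needed at $\sigma=3$.
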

 \begin{proof}
 For every $a, \alpha, b \in \mathbb{C}, a\neq 0$, $P(z)=a(z-\alpha)^ \sigma + b$ is conjugate to $z^\sigma + c$ where $c=\frac{b- \alpha}{\mu}$. In fact, considering $\phi(z)=\mu z + \alpha $ for $ \mu^{\sigma -1}=\frac{1}{a}$, it is observed that  $a (\phi(z) - \alpha)^\sigma +b =\phi(z^\sigma )+b- \alpha$. In other words, $(\phi^{-1} \circ P \circ \phi)(z)=z^\sigma + \frac{b -\alpha}{\mu}$.
 		
 \par Suppose on the contrary that there is a rational function $R$ whose Newton map $N_R$ is conjugate to $a(z-\alpha)^ \sigma + b$ for some $a, \alpha, b \in \mathbb{C}, a\neq 0$ and $\sigma \geq 3$. Then, by the observation made in the previous paragraph, $N_R$ is conjugate to $z^\sigma+c$ for some complex number $c$. If $z_i$ is a finite fixed point  of  $z^\sigma+c$ then its multiplier is $\sigma z_i ^{\sigma-1}$. It follows from Remark~\ref{multiplier} and Lemma~\ref{conjugacy} that  $\sigma z_i ^{\sigma-1} =\frac{p_i}{q_i}$ for some $p_i \in \mathbb{N} \bigcup \{0\}, q_i \in \mathbb{N} $ and $|p_i-q_i|=1$. This gives that $z_i ^{\sigma-1} =\frac{p_i}{\sigma q_i}$ for each fixed point $z_i$ of $z^\sigma +c$. Using this and noting that $z_i$ satisfies $z^\sigma -z +c=0$, we have $z_i =\frac{c \sigma q_i}{\sigma q_i -p_i}$ for each $i$. Since all the fixed points of $z^\sigma +c$ are distinct by  Remark~\ref{multiplier} and Lemma~\ref{conjugacy}, there are $\sigma$ distinct solutions of $z^\sigma -z +c=0$ and  $z^\sigma -z +c=\prod_{i=1}^\sigma (z-z_i)$. Comparing the coefficients of $z^{\sigma -1}$ on both the sides, it is found that $\sum_{i=1}^{\sigma} z_i =0$. Consequently, $c \sigma \sum_{i=1}^{\sigma} \frac{q_i}{\sigma q_i -p_i}=0$. Since $p_i \in \mathbb{N} \bigcup \{0\}, q_i \in \mathbb{N} $ and $|p_i-q_i|=1$ and $\sigma \geq 3$, $\frac{q_i}{\sigma q_i -p_i} >0$ for each $i$ and consequently, $c=0$. Hence $N_R$ is conjugate to $z^{\sigma}$. The proof will be completed by obtaining a contradiction to this statement. 
 		 \par 
 		The function $z^\sigma$ has a fixed point with multiplier $\sigma$. In fact, each $\frac{1}{\sigma -1}$-th root of unity is a fixed point of $z^\sigma$ with multiplier $\sigma$. By Lemma~\ref{conjugacy}, $N_R$ has a fixed point with multiplier $\sigma$. However, this is not possible by Remark~\ref{multiplier} as $\sigma \geq 3$. 
    	\end{proof}
 
 
 As stated earlier, Newton maps can be conjugate to polynomials. Theorem~\ref{cubicpoly} deals with the cubic case. We now present  its proof. 
  
\begin{proof}[Proof of Theorem~\ref{cubicpoly}]
	
Let a cubic Newton map $N_R$ be conjugate to a  polynomial $\tilde{P}$. Then $\tilde{P}(z)=a_3 z^3 +a_2 z^2 +a_1 z+a_0$ for $a_3 \neq 0$. Considering $\psi (z)=\frac{z}{\sqrt{a_3}}-\frac{a_2}{3 a_3}$, it can be seen that $\psi^{-1} \circ \tilde{P} \circ \psi$ is $ z^3+a z +b$ for some $a, b \in \mathbb{C}, a \neq 0$. Therefore, $N_R$ is conjugate to   $P(z) =z^3+a z +b$. In other words, $ N_R= \phi \circ P \circ \phi^{-1}$ for some M\"{o}bius map $\phi$.  In view of Lemma~\ref{conjugacy}, it is sufficient to prove this theorem for $P$. It is well known that the point $\infty$ is a superattracting fixed point of $P$ and  the corresponding attracting domain is completely invariant. This is the required $A^*$. Note that $\infty$ is a critical point of $P$ and is in $A^*$. We assert that the Fatou set of $P$ either contains another completely invariant attracting domain or two attracting domains. 
\par Since all the fixed points of $N_R$ are simple, all the fixed points of $P$ are simple by Lemma~\ref{conjugacy}. Let the finite fixed points of $P$ be $z_1, z_2, z_3$. Then the multiplier of $z_i$ is $3 z_i ^2 +a$ and its  residue index is $\frac{1}{1- 3 z_i ^2 -a}$. Note that the  residue index of $N_R$  at each of its fixed points is a nonzero integer and it is $1$ at superttracting fixed points. Since conjugacy preserves multipliers and hence  residue indices, $\frac{1}{1- 3 z_i ^2 -a}$  is an integer, say $n_i$ for each $i$.  Since each fixed point $z_i$ is a root of $z^3+(a-1)z +b$, $z_1 +z_2 +z_3=0, z_1 z_2 +z_1 z_3 +z_2 z_3 =a-1$ and $-z_1 z_2 z_3=b$. Note that $3 z_i ^2 +a=1-\frac{1}{n_i}$ and $3(z_1 ^2 + z_2 ^2 + z_3 ^2 )+ 3a=3-(\frac{1}{n_1}+\frac{1}{n_2}+\frac{1}{n_3})$. 
But $z_1 ^2 + z_2 ^2 + z_3 ^2=(z_1 +z_2 +z_3)^2 -2(z_1 z_2 +z_1 z_3 +z_2 z_3 )$. This gives that $a=1+\frac{1}{3}(\frac{1}{n_1}+\frac{1}{n_2}+\frac{1}{n_3}) $. It follows from the Rational Fixed Point Theorem  that  $n_1 +n_2 +n_3 =0$, which gives that at least one of these $n_i's$ is positive and one is negative. Let $n_1 > 0$ and $n_2 < 0$. By Remark~\ref{index}, an $n_i$ is positive if and only if the corresponding fixed point $z_i$ is attracting. Thus our assumtion means that $z_1$ is attracting and $z_2$ is repelling. The  residue  index $n_3$ of $z_3$ can be positive or negative giving rise to two cases.
\par Let $n_3 >0$. Then $P$ has three attracting fixed points, namely $z_1, z_3$ and $\infty$. For $i=1, 3$, the attracting domain corresponding to $z_i$ contains at least one ciritical point by Lemma~\ref{basic}. In this situation $P$ has  two finite critical points ruling out any Fatou component different from the attracting domains corresponding to $z_1, z_3$ and $\infty$ by Lemma~\ref{basic}.  Therefore, the Fatou set of $P$ is the union of $A^*$ and the invariant attracting domains of $z_1$ and that of $z_3$.  Since $A^*$ does not contain any finite critical point, it is simply connected, by Theorem 9.5.1~\cite{Be}. Complete invariance of $A^*$ gives that all other Fatou components are simply connected. Since there are at least three Fatou components, the number of Fatou components is infinity (see Theorem 5.6.1~\cite{Be}). Each critical point is attracted to some attracting fixed point. It follows from  Lemma 9.9.1~\cite{Be} that the Julia set of $P$ is a closed curve. Clearly, it is self intersecting.

\par For $n_3 < 0$, $P$ has two attracting fixed points $z_1, \infty$ and two repelling fixed points $z_2$ and $z_3$. Note that $n_2, n_3 <0, n_1 =-n_2 -n_3$ and therefore $\frac{1}{n_1}+ \frac{1}{n_2}+\frac{1}{n_3} < 0$. Further $ -2 \leq  \frac{1}{n_2}+\frac{1}{n_3} <0 $ gives that $ -2 \leq \frac{1}{n_1}+ \frac{1}{n_2}+\frac{1}{n_3} < 0$. This gives that $\frac{1}{3}  \leq a <1$. Since $a>0$, the two finite critical points of $P$ are purely imaginary and are complex conjugates of each other. Further, $3 z_i ^2 +a=1-\frac{1}{n_i}, i=2,3$ gives that $z_2$ and $ z_3$ are real numbers. Now $z_1 +z_2 +z_3 =0$ implies that $z_1$ is also real. Therefore, $b$, which is $-z_1 z_2 z_3$ is real. Consequently, $P(\overline{z})=\overline{P(z)}$ for all $z$. The attracting domain $A(z_1)$ of $z_1$ contains a critical point $c$ and hence the other critical point $\overline{c}$. Since $A^*$ is completely invariant, this attracting domain is simply connected. Further, $A(z_1)$ contains both the finite critical points of $P$. The degree of $P$ on $A(z_1)$ must be three by the Riemann-Hurwitz formula (Theorem 5.5.4~\cite{Be}).  In other words, $A(z_1)$ is completely invariant and the Fatou set of $P$ is the union of two completely invariant attracting domains and the Julia set is a Jordan curve by Lemma~\ref{Jordancurve}.
\end{proof}

\begin{figure}[h!]
	\begin{subfigure}{.5\textwidth}
		\centering
		\includegraphics[width=1\linewidth]{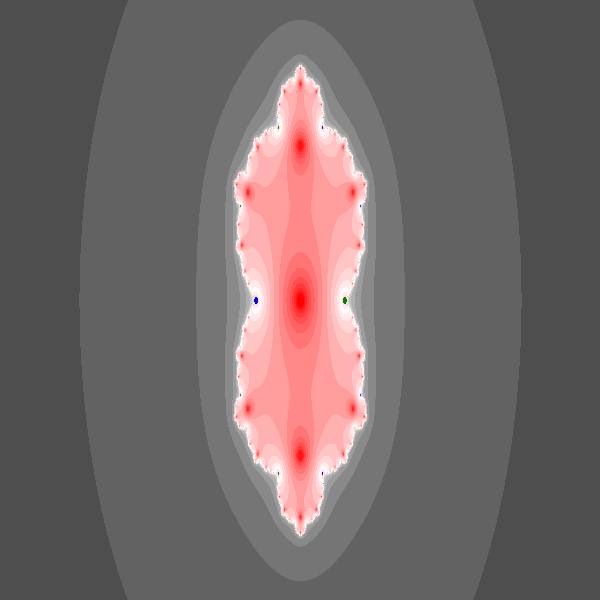}
		\caption{The Julia set of $z^3 +\frac{3}{4}z$, the Newton map of $ \frac{z^4}{(z-0.5)^2 (z+0.5)^2}$ }
		\label{Jordan}
	\end{subfigure}\hspace{0.5cm}
	\begin{subfigure}{.5\textwidth}
		\centering
		\includegraphics[width=1\linewidth]{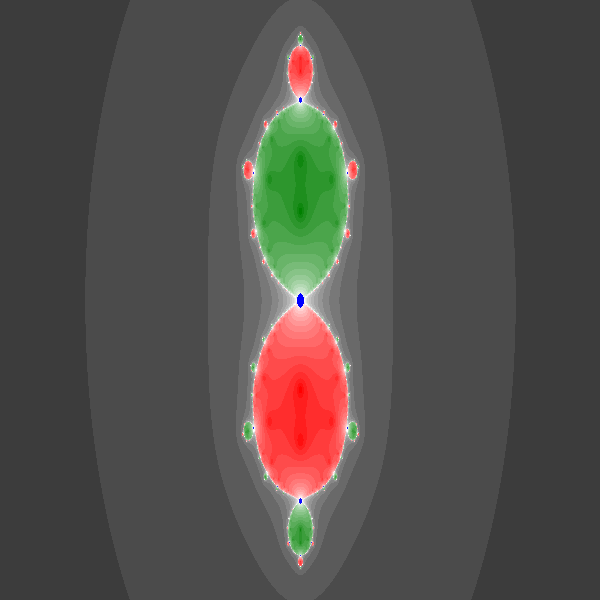}
		\caption{The Julia set of $z^3 +\frac{5}{4}z$, the Newton map of $\frac{(z+0.5 i)^2 (z-0.5 i)^2}{z^4}$}
		\label{closedcurve}
	\end{subfigure}
\end{figure}

  \begin{example}
  The Newton map of $ \frac{z^4}{(z-0.5)^2 (z+0.5)^2}$ is $z^3 +\frac{3}{4}z$ and it has only one finite attracting fixed point, namely $0$. Its Julia set is a Jordan curve. This is the boundary of the attracting domain of $\infty$, the black region as seen in Figure~\ref{Jordan}. The Newton map of $\frac{(z+0.5 i)^2 (z-0.5 i)^2}{z^4}$ is $z^3 +\frac{5}{4}z$, which has two finite attracting fixed points, namely $0.5 i$ and $-0.5i$. The corresponding attracting domains are seen in green and red which are not completely invariant. The blue dot at the intersection of two invariant attracting domains is the repelling fixed point $0$. The Julia set is the boundary of the attracting basin of $\infty$, the black region which is a self intersecting closed curve. This is Figure~\ref{closedcurve}. The figures are generated using polynomiography software.
  \end{example}
  \section{Concluding remarks}
  Firstly, we describe all the rational functions whose Newton maps are cubic and determine the situations when such a Newton map is conjugate to a polynomial.
  \par 
The point at $\infty$ is an exceptional point of every polynomial and it has another exceptional point if and only if it is a monomial  (See Theorem 4.1.2~\cite{Be}). Note that every monomial is unicritical.
If $N_R$ is the Newton map of a rational function $R$ and it is conjugate to a polynomial then this polynomial can not be unicritical by Theorem~\ref{unicritical}.  and therefore   $N_R$ has exactly one exceptional point. 
 Note that an exceptional point is necessarily a superattracting fixed point of a cubic Newton map and its local degree at such a point is three. Further, every finite superattracting fixed point of $N_R$ is a simple root of $R$. 
 
%
Two cases arise depending on the value of $N_R (\infty)$ . We need following observations for discussing these cases.
\begin{obs}
	\begin{enumerate}
		\item If $0$ is exceptional for a cubic Newton map $\frac{Az^3+Bz^2+Cz}{Dz^3+Ez^2+Fz+G}$ for $A \neq 0,B,C,D,E,F,G \in \mathbb{C}$ then $B=C=0$. 
		\item  For every rational map $\tilde{R}$, the Newton maps of $\tilde{R}$ and $  \tilde{R}(T)$ are conjugate whenever $T(z)=az+b$ is an affine map where $a,b   \in \mathbb{C},   \neq 0$. In fact, it is not difficult to see that $T(N_{  \tilde{R}(T)})=N_{\tilde{R}}(T)$~\cite{2}.
		\item If $a_1 \neq a_2$ such that $\tilde{R} ( a_1)=0=\tilde{R} (a_2)$  then by choosing $T(z)=( a_2 - a_1)z+a_1$, we see that $\tilde{R}(T)(0)=0=\tilde{R}(T)(1)$. Similarly, if $\tilde{R} ( a_1)=\infty=\tilde{R} ( a_2)$ then $\tilde{R}(T)(0)=\infty=\tilde{R}(T)(1)$ for the same $T$. 
		
		\item For $b_1 \neq b_2$, if $\tilde{R}(b_1)=0$ and $\tilde{R}(b_2)=\infty$ then taking $T(z)=(b_2 -b_1)z+b_1$, it is found that $\tilde{R}(T)(0)=0$ and  $\tilde{R}(T)(1)=\infty$.
	\end{enumerate}
\label{obs}
	\end{obs}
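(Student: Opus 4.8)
The plan is to prove the four parts in order; only part~(1) needs a genuine argument, so I would spend the effort there and treat parts~(2)--(4) as routine verifications about affine changes of variable.

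For part~(1): the numerator factors as $Az^3+Bz^2+Cz=z(Az^2+Bz+C)$, and since it has no constant term while $G\neq 0$ (for if $G=0$ then $z$ would divide both numerator and denominator, contradicting that this is the reduced form of a degree-three map), the point $0$ is a fixed point of $N$. The next step is to upgrade ``$0$ is exceptional'' to ``$N^{-1}(0)=\{0\}$'': the exceptional set of a rational map of degree at least two is completely invariant and contains at most two points (see~\cite{Be}), so if $z_1\neq 0$ were a second preimage of $0$ it would also lie in the exceptional set, which would then be exactly $\{0,z_1\}$; complete invariance would force $N^{-1}(z_1)\subseteq\{0,z_1\}$, but $N(0)=0\neq z_1$ and $N(z_1)=0\neq z_1$, so $z_1$ would have no preimage at all — impossible. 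In the setting of this section one may instead simply quote that a cubic Newton map conjugate to a polynomial has a unique exceptional point (Theorem~\ref{unicritical} and the ensuing discussion), which is then a superattracting fixed point of local degree three, so again $N^{-1}(0)=\{0\}$. Finally, because the numerator and denominator of $N$ are coprime, the zeros of $N$ are exactly the roots of $z(Az^2+Bz+C)$; the only way for this set to reduce to $\{0\}$ is $Az^2+Bz+C=Az^2$, and comparing coefficients gives $B=C=0$.

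For part~(2), the conjugating map is $T$ itself, and the identity follows from the chain rule: writing $T(z)=az+b$ with $a\neq 0$, one has $(\tilde{R}(T))'(z)=\tilde{R}'(T(z))\,T'(z)=a\,\tilde{R}'(T(z))$, hence
$$N_{\tilde{R}(T)}(z)=z-\frac{\tilde{R}(T(z))}{a\,\tilde{R}'(T(z))},$$
and applying $T$ to both sides yields
$$T\bigl(N_{\tilde{R}(T)}(z)\bigr)=a\,N_{\tilde{R}(T)}(z)+b=T(z)-\frac{\tilde{R}(T(z))}{\tilde{R}'(T(z))}=N_{\tilde{R}}\bigl(T(z)\bigr),$$
that is $T\circ N_{\tilde{R}(T)}=N_{\tilde{R}}\circ T$; since $T$ is a M\"{o}bius map this says $N_{\tilde{R}(T)}=T^{-1}\circ N_{\tilde{R}}\circ T$, so the two Newton maps are conjugate, which is the identity recorded in~\cite{2}. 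Parts~(3) and~(4) then follow by evaluating: with $T(z)=(a_2-a_1)z+a_1$ one has $T(0)=a_1$ and $T(1)=a_2$, so $\tilde{R}(a_1)=\tilde{R}(a_2)=0$ gives $\tilde{R}(T)(0)=\tilde{R}(T)(1)=0$, while $\tilde{R}(a_1)=\tilde{R}(a_2)=\infty$ gives $\tilde{R}(T)(0)=\tilde{R}(T)(1)=\infty$; and with $T(z)=(b_2-b_1)z+b_1$ one has $T(0)=b_1$ and $T(1)=b_2$, so $\tilde{R}(b_1)=0$, $\tilde{R}(b_2)=\infty$ give $\tilde{R}(T)(0)=0$ and $\tilde{R}(T)(1)=\infty$.

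The only real obstacle is the conceptual point in part~(1) — justifying the passage from ``$0$ is exceptional'' to ``$N^{-1}(0)=\{0\}$'' — after which the entire observation reduces to elementary algebra together with the chain-rule computation above.
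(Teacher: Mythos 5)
The paper states Observation~5.1 without proof, so there is no argument of the authors' to compare against; your write-up simply supplies the omitted verification, and it is correct. In part~(1) the key step — passing from ``$0$ is exceptional'' to $N^{-1}(0)=\{0\}$ via complete invariance of the (at most two-point) exceptional set, together with the remark that $G\neq 0$ forces $N(0)=0$ and coprimality identifies $N^{-1}(0)$ with the roots of $z(Az^2+Bz+C)$ — is sound, and parts~(2)--(4) are the routine chain-rule and evaluation checks the paper intends.
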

We use Observation~\ref{obs} (3-4) to simplify the form of the rational function $R$ and that is useful in deriving the conditions on it under which $N_R$ is conjugate to a polynomial. Recall that $m$ and $n$ denote the number of distinct roots and distinct poles of $R$ whereas $d$ and $e$ denote the sum of the multiplicities of all roots and of all poles of $R$ respectively. This is as  mentioned in the Proposition~\ref{NFP}. As mentioned in Theorem~\ref{2quadratic-Newtonmaps}, we assume $R=\frac{P}{Q}$ where both $P,Q$ are monic polynomials. 

\underline{\textbf{Case I  ($N_R(\infty) \neq \infty$):} }
If $N_R(\infty)\neq \infty$ then $m+n=4$ and $d=e+1$.
If either $m=0, n=4 $ or $m=4, n=0$  then  $d=e+1$ can not be true. We now look at all other possibilities. In this case, the exceptional point of $N_R$, whenever exists is a simple root of $R$. 
\begin{enumerate}
	\item \textbf{Subcase A ($m=1, n=3$)  }: The general form of the rational map is $\frac{(z-\alpha)^{d}}{(z-\beta_1)^{e_1}(z-\beta_2)^{e_2}(z-\beta_3)^{e_3}}$, where $d=e_1+e_2+e_3+1$ and $\alpha, \beta_1, \beta_2, \beta_3 \in \mathbb{C}$. By Observation~\ref{obs}(2) and (4), we assume without loss of any generality that $\alpha=0$ and $\beta_1 =1$ and the rational map is $R(z)=\frac{z^{d}}{(z-1)^{e_1}(z-\gamma_1)^{e_2}(z-\gamma_2)^{e_3}}$. 
Since $d>1$, $R$ has no simple finite root and consequently $N_R$ is never conjugate to a polynomial.
	\item \textbf{Subcase B ($m=2, n=2$)  }:
	The rational map is of the form $\frac{(z-\alpha_1)^{d_1}(z-\alpha_2)^{d_2}}{(z-\beta_1)^{e_1}(z-\beta_2)^{e_2}}$, where $ d_1+d_2=e_1+e_2+1$ and $\alpha_1, \alpha_2, \beta_1, \beta_2 \in \mathbb{C}$.
As mentioned in the begining, either $d_1 =1$ or $d_2 =1$. Assume that $d_1=1.$
In view of Observation~\ref{obs} (2) and (3), we assume $\alpha_1=0$ and $\alpha_2 =1$. The rational map becomes $ R(z)= \frac{z(z-1)^{e_1+e_2}}{(z-\gamma_1)^{e_1}(z-\gamma_2)^{e_2}}$ and its Newton map is 
	$$N_R (z)=\dfrac{(e_1+e_2-e_1\gamma_1-e_2\gamma_2)z^3+\{(e_1+e_2)\gamma
		_1\gamma_2-e_1\gamma_2-e_2\gamma_1\}z^2}{z^3-\{(1+e_1)\gamma_1+(1+e_2)\gamma_2+1-e_1-e_2\}z^2+K(z)}$$
		where $K(z)=\{(1-e_2)\gamma_1+(1-e_1)\gamma_2+(1+e_1+e_2)\gamma_1\gamma_2\}z-\gamma_1\gamma_2$.
		The point $0$ is exceptional for $N_R$ if and only if $(e_1+e_2)\gamma_1 \gamma_2 -e_1 \gamma_2 -e_2 \gamma_1=0$ by Observation~\ref{obs}(1). This is precisely when $N_R$ is conjugate to a polynomial. 
    \item \textbf{Subcase C ($m=3, n=1$)  }:
   In this situation the rational map is of the form $ \tilde{R}(z)=\frac{(z-\alpha_1)^{d_1}(z-\alpha_2)^{d_2}(z-\alpha_3)^{d_3}}{(z-\beta)^{e}} $ where $e=d_1+d_2+d_3-1$ and $\alpha_1, \alpha_2, \alpha_3, \beta \in \mathbb{C}$.
   As in the Subcase(B) above, we assume $d_1=1, \alpha_1 =0$ and $\alpha_2 =1$ without loss of generality.
    The rational map now becomes $ R(z)=\frac{z(z-1)^{d_2}(z-\alpha)^{d_3}}{(z-\gamma)^{d_2+d_3}} $. Its  Newton map is   $$ N_R(z)=\dfrac{\{d_2+d_3\alpha-(d_2+d_3)\gamma\}z^3+\{d_2\alpha\gamma+d_3\gamma-(d_2+d_3)\alpha\}z^2}{z^3-\{(1+d_2+d_3)\gamma-(d_3-1)\alpha-d_2+1\}z^2+K(z)} $$
    where $ K(z)=\{(1-d_2-d_3)\alpha+(1+d_3)\gamma+(1+d_2)\alpha\gamma\}z-\alpha\gamma$.   
    In order that $0$ is exceptional and hence $N_R$ is conjugate to a polynomial we must have $d_2\alpha \gamma+d_3\gamma-(d_2+d_3)\alpha=0$ and vice-versa.
\end{enumerate}
\underline{\textbf{Case II ($N_R(\infty)= \infty$)}:} If $N_R(\infty)= \infty$ then 
$m+n=3$, and $d\neq e+1$. In this case the exceptional point can be finite or $\infty$ depending on $R$.
\begin{enumerate}
    \item \textbf{Subcase A ($m=0, n=3$)  }:  The gneral form of the rational map is $ \frac{1}{(z-\beta_1)^{e_1}(z-\beta_2)^{e_2}(z-\beta_3)^{e_3}} $ for some $\beta_1, \beta_2, \beta_3 \in \mathbb{C}$. Here $d=0$ and $e=e_1+e_2+e_3$. By Observation~\ref{obs}(2) and (3), we assume $\beta_1=0$ and $\beta_2 =1$ and the rational map becomes $ R(z)=\frac{1}{z^{e_1}(z-1)^{e_2}(z-\gamma)^{e_3}} $.
    The Newton map is
    $$N_R(z)=\dfrac{(e+1)z^3-\{(e_1+e_2+1)\gamma+e_1+e_3+1\}z^2+(e_1+1)\gamma z}{ez^2-\{(e_1+e_2)\gamma +e_1+e_3\}z+e_1\gamma}.$$
    	Here $\infty$ is the only attracting fixed point with multiplier $\frac{e}{e+1}$ and can never be superattracting. Hence $N_R$ is never conjugate to a polynomial in this case.
   
    \item  \textbf{Subcase B ($m=1, n=2$)  }: 
   In this case, the rational map is of the form  $ \frac{(z-\alpha)^{d}}{(z-\beta_1)^{e_1}(z-\beta_2)^{e_2}}$ for some $\alpha, \beta_1, \beta_2 \in \mathbb{C}$. Using Observation~\ref{obs}(2) and (4), we assume without loss of generality that $\alpha=0$ and $\beta_1 =1$. Then the rational map becomes $ R(z)=\frac{z^{d}}{(z-1)^{e_1}(z-\gamma)^{e_2}} $.
     Here, the point  $\infty$ or $0$ is a superattracting fixed point when $d=e_1 +e_2 >1$ or $d=1$ respectively.
    \begin{enumerate}
    	\item [(i)] If $ \infty $ is superattracting then the Newton map turns out to be
    	$$N_R(z)=\dfrac{z^3+\{(e_2-1)\gamma +e_1-1\}z^2-(e_1+e_2-1)\gamma z}{(e_1+e_2\gamma)z-(e_1+e_2)\gamma}.$$
   The point $\infty$ is exceptional if and only if $N_R$ itself a polynomial, i.e.,  if and only if $\gamma =-\frac{e_1}{e_2}$.
  
    \item[(ii)] If $0$ is superattracting then the Newton map is 
   $$N_R(z)=\dfrac{(e_1+e_2)z^3-(e_1 \gamma+e_2)z^2}{(e_1+e_2-1)z^2-\{(e_1-1)\gamma +e_2-1\}z-\gamma}.$$
   As in Subcases B of Case(1),
    	$N_R$ is conjugate to a polynomial if and only if the coefficient of $z^2$ in the numerator is $0$, i.e.,  $\gamma =-\frac{e_2}{e_1}$.
   
    \end{enumerate}
    
    \item \textbf{Subcase C ($m=2, n=1$)  }: The general form of the rational map is $ \frac{(z-\alpha_1)^{d_1}(z-\alpha_2)^{d_2}}{(z-\beta)^{e}} $ for some $\alpha_1, \alpha_2, \beta \in \mathbb{C}$.
    Here the exceptional point can be $\infty$ or finite.  
    \begin{enumerate}
    	\item[(i)] The point $\infty$ is superattracting only when $e=d_1+d_2$. By Observation~\ref{obs}(2) and (3), we assume that $\alpha_1=0$ and $\alpha_2 =1$ and  we get the  rational map as $ R(z)=\frac{z^{d_1}(z-1)^{d_2}}{(z-\gamma)^{d_1 +d_2}} $.     	The Newton map is $$N_R(z)=\dfrac{z^3+\{(d_1+d_2-1)\gamma-d_2-1\}z^2-(d_1-1)\gamma z}{\{(d_1+d_2)\gamma-d_2\}z-d_1\gamma}.$$
    	The point $\infty$ is exceptional 
    		if $N_R$ itself a polynomial, i.e.,  if and only if $\gamma =\frac{d_2}{d_1+d_2}$.
      \item[(ii)] The Newton map has a finite superattracting fixed point when $d_1=1$ or $d_2 =1$. Without loss of generality, assume $d_1=1$. Further, by Observation~\ref{obs}(2) and (3), we assume $\alpha_1 =0$ and $\alpha_2=1$. Then  the   rational map becomes $ R(z)=\frac{z(z-1)^{d_2}}{(z-\gamma)^{e}} $. Its Newton map is
      $$N_R(z)=\dfrac{(d_2-e)z^3-(d_2\gamma-e)z^2}{(d_2-e+1)z^2-\{(1+d_2)\gamma-e+1\}z+\gamma}.$$
      The point $0$ is exceptional 
      	 if and only if $\gamma =\frac{e}{d_2}$ and this is precisely when  $N_R$ is conjugate to a polynomial.
    \end{enumerate}
    
    \item \textbf{Subcase D ($m=3, n=0$)}: The rational map is of the form $ (z-\alpha_1)^{d_1}(z-\alpha_2)^{d_2}(z-\alpha_3)^{d_3} $. In this case $\infty$ is a repelling fixed point of the Newton map. 
  It can have a superattracting (finite) fixed point if and only if the rational function has a simple root. Without loss of generality, assume $d_1=1$.
  In the light of Observation~\ref{obs}(2) and (3), we assume without loss of generality that $\alpha_1=0$ and $\alpha_2 =1$. The rational function becomes  $ R(z)=z(z-1)^{d_2}(z-\alpha)^{d_3} $. Its  Newton map is
    $$N_R(z)=\dfrac{(d_2+d_3)z^3-(d_2\alpha+d_3)z^2}{(d_2+d_3+1)z^2-\{(d_2+1)\alpha+d_3+1\}z+\alpha}$$
 As observed many times previously (for example in Case(1), Subcase B), the function
	$N_R$ is conjugate to a polynomial if and only if $\alpha =-\frac{d_3}{d_2}$.
	
	\par Table~\ref{table} describes all rational functions whose Newton maps are cubic. 
	
\begin{table}[h!]
\centering
\begin{tabular}{p{1.0cm} p{0.2cm} p{0.2cm} p{3.5cm} p{3.0cm} p{2.0cm}}
			\hline
			\multicolumn{5}{c}{Case-I: $N_R(\infty)\neq \infty$, $m+n=4$, $d=e+1$} \\
			\hline{\bf \vspace{0.01cm}Sub cases}&{\bf \vspace{0.1cm}$m$}&{\bf \vspace{0.1cm}$n$}&{\bf \vspace{0.1cm}$R(z)$}&{\bf \vspace{0.01cm} When  $N_R \sim$ polynomial?}&{\bf \vspace{0.01cm} Exceptional point} \\
			\hline \vspace{0.1cm}A & \vspace{0.1cm}$1$& \vspace{0.1cm}$ 3 $ & \vspace{0.1cm} $\frac{z^{e_1+e_2+e_3+1}}{(z-1)^{e_1}(z-\gamma_1)^{e_2}(z-\gamma_2)^{e_3}}$  &\vspace{0.1cm} Never&\vspace{0.1cm} NA\\
			\vspace{0.1cm}B & \vspace{0.1cm}$2$ & \vspace{0.1cm}$2$ & \vspace{0.1cm} $\frac{z(z-1)^{e_1+e_2}}{(z-\gamma_1)^{e_1}(z-\gamma_2)^{e_2}}$ &\vspace{0.1cm} $(e_1+e_2)\gamma_1 \gamma_2 -e_1 \gamma_2 -e_2 \gamma_1=0$ &\vspace{0.1cm}$0$\\
			\vspace{0.1cm}C & \vspace{0.1cm}$ 3 $ & \vspace{0.1cm}$ 1 $ & \vspace{0.1cm}$ \frac{z(z-1)^{d_2}(z-\alpha)^{d_3}}{(z-\gamma)^{d_2+d_3}} $ & \vspace{0.1cm}$d_2\alpha \gamma+d_3\gamma-(d_2+d_3)\alpha=0$ &\vspace{0.1cm}$0$\\
			\hline
			\multicolumn{5}{c}{Case-II: $N_R(\infty)=\infty$, $m+n=3$, $d\neq e+1$} \\
			\hline
			\vspace{0.1cm}A & \vspace{0.1cm}$0$ & \vspace{0.1cm}$3$& \vspace{0.1cm} $ \frac{1}{z^{e_1}(z-1)^{e_2}(z-\gamma)^{e_3}} $ &\vspace{0.1cm} Never&\vspace{0.1cm} NA\\
			\vspace{0.1cm}B (i) & \vspace{0.1cm}$1$ & \vspace{0.1cm}$2$& \vspace{0.1cm} $ \frac{z^{e_1+e_2}}{(z-1)^{e_1}(z-\gamma)^{e_2}}$ &\vspace{0.1cm} $\gamma =-\frac{e_1}{e_2}$&\vspace{0.1cm} $\infty$\\
			\vspace{0.1cm}B (ii) & \vspace{0.1cm}$1$ & \vspace{0.1cm}$2$ & \vspace{0.1cm} $ \frac{z}{(z-1)^{e_1}(z-\gamma)^{e_2}} $& \vspace{0.1cm}$\gamma =-\frac{e_2}{e_1}$&\vspace{0.1cm} $0$\\
			\vspace{0.1cm} C (i) & \vspace{0.1cm}$2$ & \vspace{0.1cm}$1$ & \vspace{0.1cm} $ \frac{z^{d_1}(z-1)^{d_2}}{(z-\gamma)^{d_1+d_2}} $& \vspace{0.1cm}$\gamma =\frac{d_2}{d_1+d_2}$ &\vspace{0.1cm}$\infty$\\
			\vspace{0.1cm}C (ii) & \vspace{0.1cm}$2$ & \vspace{0.1cm}$1$ & \vspace{0.1cm} $ \frac{z(z-1)^{d_2}}{(z-\gamma)^{e}} $& \vspace{0.1cm} $\gamma =\frac{e}{d_2}$ &\vspace{0.1cm}$0$\\
			\vspace{0.1cm} D & \vspace{0.1cm}$3$ &\vspace{0.1cm} $0$ & \vspace{0.1cm} $ z(z-1)^{d_2}(z-\alpha)^{d_3} $& \vspace{0.1cm} $\alpha =-\frac{d_3}{d_2}$&\vspace{0.1cm} $0$ \\
			\hline
\end{tabular}
\caption{\label{table} All $R$ for which $N_R$ is cubic.}
	\end{table}
     
\end{enumerate}

 A quadratic Newton map $N_R$ has three distinct fixed points. It is observed that at least one of them is attracting and one is repelling. If the third fixed point is attracting then its Julia set is a Jordan curve. Otherwise the Julia set is totally disconnected. This is a restatement of Theorem~\ref{main}. If a cubic Newton map is conjugate to a polynomial then, in addition to the superattracting fixed point it has other three fixed points. At least one of these three is attracting and another is repelling. If the third fixed point is attracting then its Julia set is a self-intersecting closed curve. The Julia set is a Jordan curve whenever the third fixed is repelling. Therefore, the dynamics of all quadratic Newton maps and all cubic Newton maps conjugate to polynomials is completely determined by the nature of their fixed points. This is unlikely to be true in general. The simplest case that can be looked into is that of all cubic Newton maps which are not conjugate to any polynomial. All the rational functions leading to such a cubic Newton map can be found in Table~\ref{table}.

\section*{Acknowledgement}
The first author acknowledges SERB, Govt. of India for financial support through a MATRICS project. The second author is supported by the University Grants Commission, Govt. of India.

\end{document}